\documentclass[11pt]{amsart}
\usepackage{amsmath,amsthm,hyperref,mathtools} 
\usepackage[alphabetic,initials,nobysame]{amsrefs}
\usepackage{color,nicefrac}
\usepackage[shortlabels]{enumitem}
\renewcommand{\MR}[1]{}

\title{Length distortion of volume-preserving Lipschitz mappings}

\author{Damaris Meier}
\address[Damaris Meier]{Department of Mathematics\\ ETH Zurich \\ R\"amistrasse 101\\ 8092 Zurich, Switzerland}
\email{damaris.meier@math.ethz.ch}

\author{Kai Rajala}
\address[Kai Rajala]{Department of Mathematics and Statistics, University of Jyväskylä, P.O. Box 35, 40014 University of Jyväskylä, Finland}
\email{kai.i.rajala@jyu.fi}

\keywords{Lipschitz-volume rigidity, bounded length distortion, metric surface, uniformization, rectifiability}
\subjclass[2020]{Primary 53C23, 53C45; Secondary 30L10, 53A05, 28A75.}

\date{\today}

\numberwithin{equation}{section}

\newtheorem{thm}{Theorem}[section]
\newtheorem{prop}[thm]{Proposition}
\newtheorem{question}[thm]{Question}
\newtheorem{lemma}[thm]{Lemma}
\newtheorem{corollary}[thm]{Corollary}

\theoremstyle{remark}
\newtheorem{rmk}[thm]{Remark}

\theoremstyle{definition}
\newtheorem{example}[thm]{Example}
\newtheorem{definition}[thm]{Definition}

\newcommand{\R}{\mathbb{R}}
\newcommand{\N}{\mathbb{N}}
\newcommand{\Nloc}{N^{1,2}_{\text{loc}}}

\newcommand{\hm}{{\mathcal H}}
\newcommand{\diam}{\operatorname{diam}}

\renewcommand{\mod}{\operatorname{Mod}}

\DeclareMathOperator{\md}{md}
\DeclareMathOperator{\apmd}{apmd}

\begin{document}
\begin{abstract}
We show that every area-preserving Lipschitz map between metric surfaces distorts the length of almost every curve by at most a multiplicative factor, answering a question posed by the first author and Ntalampekos. We give two different proofs. The first is entirely intrinsic and extends to higher dimensions under additional assumptions. The second relies on non-smooth uniformization theory and yields the conclusion that the family of curves intersecting the purely 2-unrectifiable part of a metric surface in a set of positive length is exceptional. 
\end{abstract}

\maketitle

\section{Introduction}
The Lipschitz-volume rigidity problem poses the question of whether any surjective $1$-Lipschitz map between metric spaces, which share the same volume (e.g.\ arising from Hausdorff measure), must be an isometry, which is a distance-preserving map. It is well-known that the answer to the Lipschitz-volume rigidity problem is affirmative if both spaces are closed Riemannian $n$-manifolds, see \cite{BuragoIvanov} and \cite{BessonEtAl}. In recent years there has been a growing interest in finding analogs for non-smooth settings, see e.g.\ \cites{Storm,Li:survey,CreutzSoultanis,BassoCreutzSoultanis,DelNinPerales,Zuest,MeierNtalampekos,BassoMartiWenger,Marti}. 

Every surjective 1-Lipschitz map $f\colon X\to Y$ between metric spaces $X$ and $Y$ of the same volume $\hm_X^n(X)=\hm_Y^n(Y)<\infty$ satisfies $\hm_X^n(A)=\hm_Y^n(f(A))$ for every measurable set $A\subset X$. A map with this property is called \emph{volume-preserving} (or \emph{area-preserving} for $n=2$). A possible generalization of the Lipschitz-volume rigidity problem is the following question.

\begin{question}\label{quest:main}
    Let $f\colon X\to Y$ be a volume-preserving Lipschitz map between metric spaces of locally finite Hausdorff $n$-measure. Which assumptions on $X$ and/or $Y$ ensure that $f$ satisfies additional regularity properties?
\end{question}

Here and below, \emph{locally finite} means finite on compact subsets. Recently, the first author and Ntalampekos \cite{MeierNtalampekos} established several results concerning Question \ref{quest:main} for maps between \emph{metric surfaces} $X$ and $Y$  satisfying additional assumptions, see \cite{MeierNtalampekos}*{Theorem 1.4}.

\begin{definition}
We call a metric space $X$ of locally finite Hausdorff $2$-measure a \emph{metric surface} if $X$ is homeomorphic to a connected $2$-manifold without boundary.  
\end{definition}

Question \eqref{quest:main} for general metric surfaces has remained open, see \cite{MeierNtalampekos}*{Question~1.5}. It is answered by the main result of this work, Theorem \ref{thm:main}, using the following notion of exceptional curve families.

\begin{definition}\label{def:exceptional}
We say that a family $\Gamma_0$ of rectifiable curves in a metric space $X$ with locally finite Hausdorff $n$-measure is \emph{exceptional} if there is a set $A \subset X$, $\hm^n(A)=0$, such that $\hm^1(|\gamma|\cap A)>0$ for every $\gamma \in \Gamma_0$. A property holds for \emph{almost every} curve in $X$ if the family of curves in $X$ for which the property fails is exceptional. 
\end{definition}
It follows from the definitions that the \emph{$p$-modulus} of an exceptional curve family is zero for every $1 \leq p < \infty$, see Section \ref{section:modulus}.

\begin{thm}\label{thm:main}
    Let $X$ and $Y$ be metric surfaces and $f\colon X\to Y$ an area-preserving $L$-Lipschitz map, $L\ge1$. Then there exists $C=C(L)>0$ such that 
    \begin{equation}\label{ineq:BLD}
        C\,\ell(\gamma)\leq\ell(f\circ\gamma)\leq L\,\ell(\gamma)
    \end{equation}
    holds for almost every curve $\gamma$ in $X$.
\end{thm}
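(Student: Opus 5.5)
The upper bound $\ell(f\circ\gamma)\le L\,\ell(\gamma)$ is immediate for every rectifiable $\gamma$ because $f$ is $L$-Lipschitz. The content is the lower bound, and I would prove it intrinsically, through metric differentials and a curve-wise area formula.

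\textbf{Step 1: fix a null set $A$.} Recall that $X$ is countably $2$-rectifiable up to a purely $2$-unrectifiable part. For a metric surface, I would decompose
\[
X = \bigcup_i K_i \,\cup\, P \,\cup\, N ,
\]
where each $K_i$ is a bi-Lipschitz image of a compact subset of $\R^2$, $P$ is purely $2$-unrectifiable, and $\hm^2(N)=0$. On each $K_i$, Kirchheim's metric differential gives, for almost every $x$, a seminorm $\md(f\circ\varphi_i)_x$ comparable to the norm $\md\varphi_i$.

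Area preservation and the area formula force the Jacobian of $\md(f\circ\varphi_i)_x$ relative to $\md\varphi_i$ to equal $1$ almost everywhere. Here one also needs $f$ to be injective off a null set, which follows from $\hm^2(f(A))=\hm^2(A)$ for every measurable $A$. The Lipschitz bound gives $\md f_x(v)\le L|v|_x$. A norm on $\R^2$ with Jacobian $1$ that is dominated by $L$ times another norm has its smallest stretch bounded below: for the John-ellipse type argument, the product of the two principal stretches is about $1$, up to a universal factor, so the smallest stretch is at least $c/L$. Hence
\[
\md f_x(v)\ge (c/L)\,|v|_x \quad\text{for almost every } x\in\bigcup_i K_i .
\]
I would let $A$ consist of $N$, the exceptional points of the differentials, and $P$ (provided $\hm^2(P)=0$; otherwise see Step 3).

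\textbf{Step 2: pass to curves.} For a curve $\gamma$ with $\hm^1(|\gamma|\cap A)=0$, parametrize by arc length. For almost every $t$ with $\gamma(t)\in K_i$, the curve has a metric derivative and $\varphi_i^{-1}\circ\gamma$ is differentiable along density points. A Fuglede-type argument is needed here, but in the exceptional-family language it amounts to this: the set of times where $\gamma$ lies in $K_i$ but is not differentiable in a chart is a null set along the curve, or else the curve meets a set of measure zero in positive length. Then $|(f\circ\gamma)'(t)|=\md f_{\gamma(t)}(\dot\gamma)\ge (c/L)$. Integrating gives $\ell(f\circ\gamma)\ge (c/L)\,\ell(\gamma)$. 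For this one needs the pieces off $\bigcup_i K_i$ to carry no length, which is exactly why $P$ and $N$ must be part of $A$.

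\textbf{Step 3: the main obstacle.} The purely unrectifiable part $P$ may have positive $\hm^2$-measure, so it cannot simply be placed in $A$. I would instead show that curves meeting $P$ in positive length form an exceptional family.

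My first attempt would use a curve-wise coarea argument. For any $1$-Lipschitz $u$, the restriction to $P$ has $\hm^1(u^{-1}(s)\cap P)$ controlled, but purely unrectifiable sets support no rectifiable pieces. Consequently, if a positive-modulus family of curves met $P$ in positive length, Fubini would produce a Lipschitz image of a positive-measure planar set inside $P$, which is a contradiction.

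If this fails, I would fall back on uniformization of metric surfaces (a weakly quasiconformal map $u\colon\Omega\to X$ with $P$ corresponding to a null set up to the Jacobian). After that, I would replace $A$ by a genuinely $\hm^2$-null set, chosen so that the bad curves still meet it in positive length. The expected difficulty is exactly this upgrade from modulus zero to exceptional in the sense of Definition~\ref{def:exceptional}.
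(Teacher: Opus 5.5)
\textbf{Steps 1--2.} These are essentially sound. They give a chart-based version of the paper's treatment of the rectifiable part (Theorem~\ref{thm:rectifiable}). With Kirchheim's bi-Lipschitz pieces $\varphi_i$, area preservation and the area formula give $J(\md(f\circ\varphi_i)_z)\ge J(\md(\varphi_i)_z)$ for a.e.\ $z$, even without injectivity of $f$. The John-ellipse comparison then bounds the minimal stretch below by a multiple of $1/L$. The Lebesgue-null sets where Kirchheim differentiability fails can simply be pushed into $A$ via $\varphi_i$, so no Fuglede argument is needed.

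\textbf{The gap is Step~3.} This is the heart of the theorem, not a technical remainder: the claim that curves meeting the purely $2$-unrectifiable part $P$ in positive length form an exceptional family is exactly Corollary~\ref{cor:unrectifiable}.

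Your first attempt never uses that $X$ is a surface, and without that input its conclusion is false. Take a compact purely $1$-unrectifiable $C\subset\R^2$ with $0<\hm^1(C)<\infty$. Then $P=[0,1]\times C\subset\R^3$ is purely $2$-unrectifiable. Yet the segments $[0,1]\times\{c\}$, $c\in C$, form a family of positive modulus (with respect to $\hm^2$ on $P$) that meets $P$ in positive length. One-dimensional slices do not assemble into a $2$-dimensional Lipschitz image.

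Your fallback only names uniformization. The weak quasiconformality of $u$ bounds the modulus of a family in $D$ by that of its image, not conversely. So it does not by itself show that a family in $X$ is negligible.

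\textbf{Two ideas are missing.}

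First, the upgrade to an exceptional family, which you flag as the main difficulty, is handled by curve-regular points. Almost every point of an arc-length parametrized curve lies in $X_1$. So it suffices to prove $\hm^2(X^u_0\cap X_1)=0$ (Proposition~\ref{prop:F}), and then $A=X^u_0\cap X_1$ is a genuine null set.

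Second, this null-measure statement uses planar topology. Near $x\in X_1$, every small sphere $S(x,s)$ contains a continuum of diameter at least $s$; this is the separation argument of Proposition~\ref{prop:inf-H^2-lower-bd}. Lift this continuum through the monotone proper map $u$ into a level set of $d(x_j,u(\cdot))$ and apply the Sobolev coarea formula. This gives $r_j^2\lesssim\int_{u^{-1}(B_j)}\rho^2$ for an upper gradient $\rho$ of $u$. A $5r$-covering by such balls with $|u^{-1}(\bigcup_j B_j)|_2$ arbitrarily small then forces $\hm^2(X^u_0\cap X_1)=0$.

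\textbf{An alternative route.} The paper's intrinsic proof avoids the rectifiable/unrectifiable decomposition altogether. The same continuum argument gives $\hm^2(B(\gamma(t_0),\delta))\ge\delta^2/8$ near every point of strong metric differentiability of $\gamma$, including points of $P$. If $f\circ\gamma$ moved too little, $f$ would map many such disjoint balls into one small ball around $f(x)$. This contradicts area preservation together with the upper density bound at $f(x)$, which holds off an $\hm^2$-null set. The result is $|(f\circ\gamma)'|\ge C(L)$ directly.
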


We may set $C(L)=\frac{\pi}{8L}$ and $C(1)=1$, see Theorem \ref{thm:rectifiable} and Remark \ref{rmk:C(1)=1} below. The constant $C(L)=\frac{\pi}{8L}$ is not optimal, and it would be interesting to know how much it can be improved. The fact that inequality \eqref{ineq:BLD} is only satisfied for almost every curve in $X$ is sharp, as illustrated by the following example, compare to \cite{MeierNtalampekos}*{Example~4.1}. 

\begin{example}
    Denote by $I$ the unit interval embedded isometrically in $\R^2$, and let $Y:=\R^2/I$ be equipped with the quotient metric. Then the natural projection $f\colon\R^2\to Y$ is $1$-Lipschitz and area-preserving, and $f$ collapses any curve parametrizing a subinterval of $I$ to a curve of length $0$.
\end{example}

A map $f\colon X\to Y$ satisfying inequality \eqref{ineq:BLD} for every curve $\gamma$ in $X$ is called of \emph{bounded length distortion (BLD)}, a notion introduced in \cite{MartioVaisala}. It is shown in \cite{MeierNtalampekos}*{Theorem~1.4} that, under additional assumptions on $Y$, the map $f$ upgrades to being a quasiconformal homeomorphism that is also BLD. Moreover, if $Y$ is Riemannian, then a 1-Lipschitz area-preserving map $f\colon X\to Y$ is an isometry, providing a Lipschitz-volume rigidity result with no assumptions on the domain, see \cite{MeierNtalampekos}*{Theorem 1.4} and \cite{BassoMartiWenger}*{Theorem 1.4}. In this work, we present two different proofs of Theorem \ref{thm:main}.

\subsection{Intrinsic proof of Theorem \ref{thm:main} and higher dimensions}
The first proof of Theorem \ref{thm:main} is entirely intrinsic, in contrast to the work \cite{MeierNtalampekos}, which is heavily dependent on a deep uniformization theorem of metric surfaces \cite{NR22}, see also \cites{NR:21,MW21}. For almost every rectifiable curve $\gamma$ in $X$ parametrized by arc length, we will show the following. 

First, we use planar topological results and the coarea inequality for Lipschitz functions to establish a \emph{quadratic lower bound on the Hausdorff 2-measure} of small balls centered on the image $|\gamma|$ close to $x=\gamma(t)$, where $t$ is a point of \emph{strong metric differentiability} of $\gamma$. We refer to Section \ref{sec:metric-diff} for the definition of strong metric differentiability and to Section \ref{sec:auxiliary} for the exact statement of the result and its proof. 

The area-preservation and Lipschitz property of $f$ can then be used to prove a uniform lower bound on the metric derivative $|(f\circ\gamma)'|(t)$ of $f\circ\gamma$ at $t$ whenever $f(x)$ is a point of \emph{bounded upper density}. Indeed, if the lower bound does not hold, then, by the Lipschitz property, the images of small balls centered at $|\gamma|$ close to $x$ are contained in a small ball $B$ centered at $f(x)$. We may now estimate $\hm^2(B)$ from below by making use of the quadratic lower bound mentioned above and the area-preservation of $f$. By letting all radii go to 0, we eventually get a contradiction to the upper bound on the density at $f(x)$. The lower bound on $|(f\circ\gamma)'|(t)$ then gives the desired length distortion estimate. We refer to Section \ref{sec:proof:intrinsic} for more details. 

The planar structure of a metric surface is only used in the proof of the quadratic lower bound on $\hm^2$. Thus, the proof strategy extends to higher dimensions under the \emph{lower Ahlfors $n$-regularity assumption} \eqref{eq:lower-Ahlfors} as follows.

\begin{thm}\label{thm:main-higher-dim}
    Let $X$ and $Y$ be metric spaces of locally finite Hausdorff $n$-measure, $n \geq 2$, and assume that 
    there exists $c>0$ such that 
\begin{equation}\label{eq:lower-Ahlfors}
    \hm^n(B(x,r))\ge c\, r^n \quad \text{for every } x \in X \text{ and } 
    0<r<\diam(X). 
\end{equation}
    If $f\colon X\to Y$ is a volume-preserving $L$-Lipschitz map, $L\ge1$, then there is $0<C=C(L,c)\leq 1$ such that
    \begin{equation*}
        C\,\ell(\gamma)\leq\ell(f\circ\gamma)\leq L\,\ell(\gamma)
    \end{equation*}
    holds for almost every curve $\gamma$ in $X$.
\end{thm}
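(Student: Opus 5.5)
The upper bound $\ell(f\circ\gamma)\le L\,\ell(\gamma)$ holds for every curve because $f$ is $L$-Lipschitz, so only the lower bound needs proof. I will prove a pointwise statement. For a rectifiable $\gamma$ parametrized by arc length, I aim to show $|(f\circ\gamma)'|(t)\ge C$ for a.e.\ $t$. The metric length formula $\ell(f\circ\gamma)=\int |(f\circ\gamma)'|(t)\,dt$ then gives the claim, since $f\circ\gamma$ is Lipschitz, hence absolutely continuous.

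\textbf{Exceptional sets.} The first set is $A_Y\subset Y$, the set of points $y$ where the upper density $\limsup_{r\to0}\hm^n(B(y,r))/r^n$ is infinite. Since $\hm^n_Y$ is locally finite, the standard density theorem gives $\hm^n(A_Y)=0$. Area preservation gives $\hm^n(f^{-1}(A_Y))=\hm^n(f(f^{-1}(A_Y)))\le\hm^n(A_Y)=0$, so $A:=f^{-1}(A_Y)$ is $\hm^n$-null in $X$. The curves meeting $A$ in positive $\hm^1$-measure form an exceptional family, so I discard them. For every remaining $\gamma$, the point $f(\gamma(t))$ has finite upper density for a.e.\ $t$. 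Because $\gamma$ has arc-length parametrization, $\hm^1(|\gamma|\cap A)=0$ implies $\gamma^{-1}(A)$ has measure zero. This step needs some care when $\gamma$ is not injective, but the Lipschitz area formula for curves handles it. In addition, by metric differentiability of Lipschitz curves, $|\gamma'|(t)=1$ at a.e.\ $t$. I may also restrict to points of strong metric differentiability, where $d(\gamma(s),\gamma(s'))\ge (1-\epsilon)|s-s'|$ for $s,s'$ near $t$.

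\textbf{Key estimate.} Fix such a $t$ and put $x=\gamma(t)$, $y=f(x)$, with $M:=\limsup_r \hm^n(B(y,r))/r^n<\infty$. Suppose $|(f\circ\gamma)'|(t)<C$. Then for small $\delta$ the points $f(\gamma(s))$ with $|s-t|\le\delta$ lie in $B(y,C\delta)$, up to a $o(\delta)$ error. Choose $N\approx 1/\eta$ parameters $s_i\in[t-\delta,t+\delta]$ spaced $2\eta\delta$ apart. Strong metric differentiability makes the balls $B(\gamma(s_i),\eta\delta/2)$ pairwise disjoint for small $\delta$. By \eqref{eq:lower-Ahlfors}, each ball has measure at least $c(\eta\delta/2)^n$. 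Their images lie in $B(y,C\delta+L\eta\delta/2+o(\delta))$, and area preservation (with disjointness, applied via $f$ on the union) gives
\[
N c(\eta\delta/2)^n\le \hm^n\bigl(f(\textstyle\bigcup_i B_i)\bigr)\le \hm^n\bigl(B(y,(C+L\eta)\delta)\bigr)\lesssim M\,(C+L\eta)^n\delta^n.
\]
With $N\sim1/\eta$ this reads $c\,\eta^{n-1}\lesssim M(C+L\eta)^n$. Choosing $C=\eta$ gives a right side of order $M(1+L)^n\eta^n$. Letting $\eta\to0$ forces $c\le M(1+L)^n\eta$, which is a contradiction, but only because $M$ is finite. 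The resulting bound depends on $M$, which is not uniform.

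\textbf{Main obstacle and fix.} The hard step is removing the dependence on the density $M$ at $y$, so that $C=C(L,c)$. Here I will use the fact that the whole curve piece is compressed, not only a point. If $|(f\circ\gamma)'|<C$ on a set of positive length, then, by a density-point argument, it also holds at nearby parameters. Covering such an image segment by balls $B(f(\gamma(s_j)),C\delta)$ and comparing with the lower bound on the union of preimage balls yields an average density estimate along $f\circ\gamma$. This average bound should be used as a substitute for $M$. Concretely, I will compare $\hm^n$ of the $r$-neighborhood of $f(\gamma(E))$, where $E$ is the bad set, with $\hm^n$ of the $r/L$-neighborhood of $\gamma(E)$. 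The latter is at least of order $c\,r^{n-1}|E|$ by disjoint balls along $\gamma$. The former needs an upper bound of order $r^{n-1}\cdot(\text{length of } f\circ\gamma|_E)$ up to density factors. I expect this comparison, which must be made using only the finiteness of density a.e., to be the delicate part. Failing a uniform bound, I would fall back on showing that the set of $t$ where $f(\gamma(t))$ has density above a large threshold $K$ is small for a.e.\ curve, and then optimize $\eta$ against $K$.
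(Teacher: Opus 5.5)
Your key estimate uses the same mechanism as the paper. The paper proves this theorem by rerunning its intrinsic surface argument with the planar area bound of Section~\ref{sec:auxiliary} replaced by lower Ahlfors regularity. That mechanism has four steps: disjoint balls of comparable radius centered along $\gamma$ near a point of strong metric differentiability; lower Ahlfors regularity for their volume; volume preservation to push that volume into a small ball around $f(x)$; and an upper density bound at $f(x)$ for the contradiction.

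As written, however, the proposal does not prove the theorem. You only discard curves meeting $f^{-1}$ of the set of infinite upper density, so your constant depends on the density $M$ at $f(\gamma(t))$. A pointwise bound $|(f\circ\gamma)'|(t)\ge C(M(t))>0$ with no uniform constant does not give $\ell(f\circ\gamma)\ge C\,\ell(\gamma)$. Your paragraph on the main obstacle and its fix is only a plan. The neighborhood comparison it sketches would need upper bounds on $\hm^n$ of neighborhoods of $f(\gamma(E))$ in $Y$, which is exactly the density control you lack, so it does not close the gap.

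The missing ingredient is the sharp form of the density theorem you invoke (Theorem~\ref{thm:regular}). In a metric space of locally finite Hausdorff $n$-measure, $\limsup_{r\to0}\hm^n(\bar B(y,r))/(\omega_n r^n)\le 1$ at $\hm^n$-almost every $y$, not merely $<\infty$. So take $E:=f^{-1}(\{y:\limsup_{r\to0}\hm^n(\bar B(y,r))/(\omega_n r^n)>1\})$, which is null by volume preservation. Curves meeting $E$ in positive length form an exceptional family, and at every other point $\hm^n(B(f(x),R))\le 2\omega_n R^n$ for all small $R$. Feed $M\le 2\omega_n$ into your inequality, with $N\ge 1/\eta$ and $C=\eta$. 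You get $c\,2^{-n}\eta^{n-1}\le 2\omega_n(1+L)^n\eta^n$, which fails for $\eta<c/(2^{n+1}\omega_n(1+L)^n)$, so $C$ depends only on $n$, $c$, $L$.

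This is precisely the paper's argument. There it is phrased as $\diam f(\gamma((-\varepsilon,\varepsilon)))\ge\delta$, with $\delta$ comparable to $cL^{-n}\varepsilon$, and then converted into a derivative bound via strong metric differentiability of $f\circ\gamma$. Your fallback is in fact the right fix, once you know that the set where the density exceeds $K$ is null, not just small, for every $K>\omega_n$.

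A minor correction: strong metric differentiability gives $d(\gamma(s),\gamma(s'))\ge|s-s'|-o(\delta)$ for $s,s'\in[t-\delta,t+\delta]$, not $(1-\epsilon)|s-s'|$. This still yields disjointness of your balls, because $\eta$ is fixed before $\delta\to0$.
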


The class of lower Ahlfors $n$-regular spaces contains all closed \emph{linearly locally contractible metric $n$-manifolds}, see \cite{BassoMartiWenger}*{Theorem~4.1}. It is desirable to find conditions weaker than \eqref{eq:lower-Ahlfors} under which the conclusion of Theorem \ref{thm:main-higher-dim} remains true. 
This is the case for $L=1$ after replacing \eqref{eq:lower-Ahlfors} by countable $n$-rectifiability of $X$, see \cite{CreutzSoultanis}*{Proposition 4.1}. 

\begin{question} 
Does Theorem \ref{thm:main-higher-dim} hold if, instead of \eqref{eq:lower-Ahlfors}, we assume that $X$ is countably $n$-rectifiable?
\end{question}

\subsection{Proof of Theorem \ref{thm:main} via uniformization of metric surfaces}
Our second proof of Theorem \ref{thm:main} depends on the above mentioned uniformization result \cite{NR22}. On the way, we gain new insights into the structure of the ``singular'' part of a metric surface; see Theorem \ref{thm:unrectifiable} below. 

By \cite{NR22}, for any metric surface $X$ there are a Riemannian 2-manifold $(M,g)$ of constant curvature that is homeomorphic to $X$ and a continuous, surjective, proper, monotone and \emph{weakly quasiconformal} Sobolev map $u\in N^{1,2}_\text{loc}(M,X)$, see Section \ref{section:uniformization}. We refer to Section \ref{section:Sobolev} for a definition of a Sobolev map $u\in N^{1,2}_\text{loc}(M,X)$ and only note that, up to a set $G_0\subset M$ of Hausdorff 2-measure zero, the domain $M$ may be exhausted by sets on which $u$ is Lipschitz, see Section \ref{section:uniformization}. We set $X^u_0:=u(G_0)$ and note that $X\setminus X^u_0$ is countably $2$-rectifiable. We refer to $X\setminus X^u_0$ as the \emph{rectifiable image} of $u$ and to $X^u_0$ as the \emph{singular-set image} of $u$. 

The conclusion of Theorem \ref{thm:main} for curves whose intersections with the singular-set image of $u$ are negligible follows from an application of \emph{approximate metric differentiability} of Sobolev maps with euclidean domains, and the associated area formula and dilatation bounds, see Section \ref{section:diff} and Theorem \ref{thm:rectifiable} below. Therefore, Theorem \ref{thm:main} is derived from the next result. 

\begin{thm}\label{thm:unrectifiable}
    Let $X$ be a metric surface. The family of all curves $\gamma$ in $X$ satisfying $\hm^1(|\gamma|\cap X^u_0)>0$ is exceptional for any choice of the uniformization map $u$. 
\end{thm}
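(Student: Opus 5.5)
We will take as exceptional set
\[
A:=\Bigl\{x\in X^u_0:\ \limsup_{r\to0}\frac{\hm^2\bigl(B(x,r)\setminus X^u_0\bigr)}{r^2}>0\Bigr\}.
\]
Then we show that $\hm^2(A)=0$, and that every curve $\gamma$ with $\hm^1(|\gamma|\cap X^u_0)>0$ satisfies $\hm^1(|\gamma|\cap A)>0$. The first claim is a density statement. $X\setminus X^u_0$ is countably $2$-rectifiable and $\hm^2$ is locally finite on $X$. The Lebesgue differentiation theorem for the doubling-free measure $\hm^2\llcorner X$ is not available in general, so we would instead argue as follows. We cover $\{x\in X^u_0:\limsup r^{-2}\hm^2(B(x,r)\setminus X^u_0)>\varepsilon\}$ by small balls via the $5r$-covering lemma. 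The total measure of the cover is then bounded by $C\varepsilon^{-1}\hm^2(U\setminus X^u_0)$ for any open $U\supset A$. Letting $U$ shrink to a set whose intersection with $X\setminus X^u_0$ is small, which is possible by outer regularity, gives $\hm^2(A)=0$.

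For the second claim it suffices to show the following. For $\hm^1$-a.e.\ point $x=\gamma(t)\in |\gamma|\cap (X^u_0\setminus A)$, with $t$ a point of strong metric differentiability of the arc-length parametrized $\gamma$, we get a contradiction. At such $x$ we have $\hm^2(B(x,r)\setminus X^u_0)=o(r^2)$. By the area formula for $u$ on the Lipschitz pieces exhausting $M\setminus G_0$, together with the dilatation bound of weakly quasiconformal maps, this gives the energy bound
\[
\int_{u^{-1}(B(x,r))} g_u^2 \le K\,\hm^2\bigl(B(x,r)\setminus X^u_0\bigr)=o(r^2),
\]
where $g_u$ is the minimal weak upper gradient. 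Here we use that $g_u=0$ a.e.\ on $G_0$ since $\hm^2(G_0)=0$.

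Next we obtain a lower bound for the same energy. Since $\gamma$ passes through $x$ with unit speed, the continua $E_r:=u^{-1}(|\gamma|\cap \overline B(x,r/4))$ and $F_r:=u^{-1}(X\setminus B(x,r/2))$ are connected or at least contain nondegenerate pieces, by monotonicity and properness of $u$. Their images have diameter comparable to $r$. Consider the family $\Sigma_r$ of curves in $M$ separating $E_r$ from $F_r$. For every such curve $\sigma$ outside a modulus-zero family, the image $u\circ\sigma$ separates $|\gamma|\cap \overline B(x,r/4)$ from $X\setminus B(x,r/2)$ in the surface $X$. It therefore has length at least $c\,r$, and by the upper gradient property $c\,r\le\int_\sigma g_u$. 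Hence
\[
r^2\,\mod_2(\Sigma_r)\lesssim \int_{u^{-1}(B(x,r))} g_u^2 .
\]
If $\mod_2(\Sigma_r)\ge c_0>0$ uniformly in small $r$, this contradicts the $o(r^2)$ bound.

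The main obstacle is this uniform lower bound on $\mod_2(\Sigma_r)$. I would try to obtain it through the planar duality between separating and connecting families: $\mod_2(\Sigma_r)\,\mod_2(\Gamma_r)=1$ for the joining family $\Gamma_r$ of $E_r$ and $F_r$ in a suitable chart. This reduces the problem to an upper bound on $\mod_2(\Gamma_r)$. Weak quasiconformality gives $\mod_2(\Gamma_r)\le K\mod_2(u\circ\Gamma_r)$. The image family joins $|\gamma|\cap\overline B(x,r/4)$ to $X\setminus B(x,r/2)$ inside $B(x,r)$. Using the admissible function $\rho=\frac{4}{r}\chi_{B(x,r)}$ then gives $\mod_2(u\circ\Gamma_r)\le 16\,r^{-2}\hm^2(B(x,r))$. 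This is bounded if $x$ has bounded upper density, which holds for $\hm^1$-a.e.\ point of $|\gamma|$ outside an exceptional family. That last fact would itself need an argument via the quadratic lower bound and coarea inequality of the auxiliary section; if it fails, I would restrict to points where the upper $2$-density is finite and handle the remaining curves by adding the infinite-density set, which is $\hm^2$-null, to $A$.
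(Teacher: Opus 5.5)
Your framework is sound and close to the paper's. The estimate $\hm^2(A)=0$ follows from the standard upper-density argument ($5r$-covering plus outer regularity) for $\hm^2$ restricted to $X\setminus X^u_0$. The energy bound $\int_{u^{-1}(B(x,r))}g_u^2\le\frac{4}{\pi}\hm^2(B(x,r)\setminus X^u_0)$ follows from the area formula, $N(\cdot,u,D)=1$ a.e.\ and \eqref{ineq:dilatationbound}. Bounded upper density is handled by adding the null set of Theorem \ref{thm:regular} to $A$.

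\textbf{The gap} is the step you call the main obstacle: the uniform lower bound on $\mod_2(\Sigma_r)$. The duality you propose for it is not available here. The identity $\mod_2(\Sigma_r)\mod_2(\Gamma_r)=1$ holds for ring domains. But $F_r=u^{-1}(X\setminus B(x,r/2))$ may have compact components, because metric balls in a metric surface can have holes (e.g.\ thin tall fingers attached near $x$). Then $D\setminus(E_r\cup F_r)$ is not a ring, whatever chart you use.

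For such condensers, an upper bound on $\mod_2(\Gamma)$ gives no lower bound on the modulus of closed separating curves. Take $E=[-1,1]\times\{0\}$, and let $F$ be $\{|z|\ge R\}$ together with closed disks of radius $\epsilon<w/2$ centred at $(jw,\pm w)$, for integers $j$ with $|jw|\le 1$.
\begin{itemize}
\item Every closed curve separating $E$ from $F$ crosses each vertical segment from $E$ to an upper disk. So it passes through about $2/w$ points, at mutual distance at least $w$, lying within distance $w$ of $E$.
\item Hence $\frac12\chi_{\{\operatorname{dist}(\cdot,E)<2w\}}$ is admissible, giving $\mod_2(\Sigma)\le Cw$.
\item Meanwhile $\mod_2(\Gamma)\le 2\pi(\log R)^{-1}+Cw^{-1}(\log(w/\epsilon))^{-1}$.
\end{itemize}
Both moduli are small for suitable $\epsilon$ and $R$. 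Duality for general condensers (Ziemer) is formulated with separating \emph{sets}, which need not be connected, and your curve-based length argument does not apply to them directly.

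There is also a real tension. Separating $E_r$ from all of $F_r$ is exactly what makes your claim $\ell(u\circ\sigma)\ge cr$ work. Both subarcs $\gamma([t+r/8,t+3r/5])$ and $\gamma([t-3r/5,t-r/8])$ must then meet $u(|\sigma|)$, and strong differentiability puts the two crossing points about $r/4$ apart. If you fill in the holes to get a ring, duality returns, but $u\circ\sigma$ may run through the holes and the length bound is no longer evident.

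\textbf{The missing idea} is that no separating family or duality is needed. The energy lower bound comes directly from level sets of $u_x:=d(x,u(\cdot))$, as in the proof of Proposition \ref{prop:F}:
\begin{itemize}
\item At a curve-regular $x$, Corollary \ref{cor:value-metr-diff}(i) gives continua $\Gamma(x,s)\subset S(x,s)$ with $\diam\Gamma(x,s)\ge s$. Its proof (Moore's theorem and the crossing argument of Proposition \ref{prop:inf-H^2-lower-bd}) is insensitive to holes.
\item By monotonicity and properness, $u^{-1}(\Gamma(x,s))$ is a continuum in $u_x^{-1}(s)$, which has finite length for a.e.\ $s$.
\item A $1$-Lipschitz parametrization of multiplicity at most two (the Rajala--Romney result) and the upper gradient inequality give $2\int_{u_x^{-1}(s)}\rho\,d\hm^1\ge s$.
\item The Sobolev coarea formula (Theorem \ref{thm:coarea:Sobolev}) with $|\nabla u_x|\le\rho$ then gives $\int_{u^{-1}(B(x,r))}\rho^2\ge r^2/8$ for every $L^2$ upper gradient $\rho$, hence for $g_u$.
\end{itemize}
Combined with your energy upper bound, this puts every curve-regular point of $X^u_0$ into $A$ and closes your argument.

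The paper instead covers $X^u_0\cap X_1$ directly by disjoint balls whose $u$-preimages have small Lebesgue measure, and uses absolute continuity of $\int\rho^2$. That version needs neither the dilatation bound nor bounded upper density, which is why it gives Theorem \ref{thm:unrectifiable:general} for arbitrary continuous, surjective, proper, monotone Sobolev maps.
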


In addition to Theorem \ref{thm:main}, Theorem \ref{thm:unrectifiable} 
also yields results of independent interest on the structure of a metric surface. The following statement is a direct consequence of Theorem \ref{thm:unrectifiable}. For a definition of a purely 2-unrectifiable part of a metric surface, we refer to Section \ref{section:Hausdorff}.

\begin{corollary}\label{cor:unrectifiable}
    Let $X'$ be a purely 2-unrectifiable part of a metric surface $X$. Then the family of all curves $\gamma$ in $X$ satisfying $\hm^1(|\gamma|\cap X')>0$ is exceptional. 
\end{corollary}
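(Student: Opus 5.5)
The plan is to deduce the corollary directly from Theorem \ref{thm:unrectifiable}. It suffices to show that $\hm^2(X'\setminus X^u_0)=0$ for one fixed choice of uniformization map $u$.

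Granting this, set $A:=X'\setminus X^u_0$. Let $\gamma$ be a curve with $\hm^1(|\gamma|\cap X')>0$. Since
\[
|\gamma|\cap X'\subset (|\gamma|\cap X^u_0)\cup(|\gamma|\cap A),
\]
either $\hm^1(|\gamma|\cap X^u_0)>0$ or $\hm^1(|\gamma|\cap A)>0$. The family of curves of the first kind is exceptional by Theorem \ref{thm:unrectifiable}, witnessed by a null set $A_1$. The family of the second kind is exceptional, witnessed by $A$ itself because $\hm^2(A)=0$. The union $A_1\cup A$ is again $\hm^2$-null. Every curve in our family meets this union in a set of positive length, so the family is exceptional. (Equivalently, a union of two exceptional families is exceptional.)

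It remains to prove $\hm^2(X'\setminus X^u_0)=0$. Recall from Section \ref{section:uniformization} that $X\setminus X^u_0=u(M\setminus G_0)$, and that $M\setminus G_0$ is a countable union of sets $K_i$ on which $u$ is Lipschitz. Each $u(K_i)$ is therefore a Lipschitz image of a subset of the 2-manifold $M$. Passing to charts, it is a countable union of Lipschitz images of subsets of $\R^2$, so $X\setminus X^u_0$ is countably 2-rectifiable.

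By the definition of a purely 2-unrectifiable part (Section \ref{section:Hausdorff}), $X'$ meets every Lipschitz image $g(E)$, with $E\subset\R^2$ and $g$ Lipschitz, in a set of zero $\hm^2$-measure. Applying this to each of the countably many pieces and using countable subadditivity gives
\[
\hm^2\bigl(X'\cap(X\setminus X^u_0)\bigr)=0.
\]

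The only point requiring care is to match the paper's definition of the purely 2-unrectifiable part. If it is defined via a decomposition $X=X_r\cup X'$ with $X_r$ countably rectifiable and $X'$ purely unrectifiable, then the argument above applies verbatim. If instead $X'$ is defined only up to $\hm^2$-null sets, the same argument still works, since adding or removing null sets preserves exceptionality.
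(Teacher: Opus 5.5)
Your proposal is correct and matches the paper's argument: the paper presents the corollary as a direct consequence of Theorem \ref{thm:unrectifiable}, noting that $\hm^2(X'\setminus X^u_0)=0$ because $X\setminus X^u_0$ is countably 2-rectifiable. The family then splits into two exceptional pieces, exactly as you do. One small imprecision: in general only the inclusion $X\setminus X^u_0\subset u(D\setminus G_0)=\bigcup_{j\ge1}u(G_j)$ holds, by surjectivity of $u$ (equality can fail because $u$ need not be injective), but this inclusion is all your argument needs.
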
 

Finally, the proof of Theorem \ref{thm:unrectifiable} can be used to show that the singular-set image of $u$ is essentially independent of $u$. 

\begin{thm} \label{thm:singular} 
Let $X$ be a metric surface. There exists $X_0 \subset X$ such that for any choice of the uniformization map $u$, we have 
$$
\hm^2(X_0 \setminus X^u_0)=\hm^2(X^u_0 \setminus X_0)=0. 
$$ 
\end{thm}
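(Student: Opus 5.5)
We need to prove Theorem \ref{thm:singular}: there is a set $X_0$ that agrees, up to $\hm^2$-null sets, with $X^u_0$ for every choice of uniformization map $u$. The plan is to define $X_0$ intrinsically and then compare it with each $X^u_0$. The natural candidate is a ``maximal purely 2-unrectifiable part'' of $X$. Since $\hm^2$ is locally finite and $X$ is $\sigma$-compact, we can decompose $X = R \cup P$, where $R$ is countably 2-rectifiable and $P$ is purely 2-unrectifiable. This decomposition is unique up to $\hm^2$-null sets (standard: take a supremum over rectifiable subsets of compact pieces). We then set $X_0 := P$.

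\textbf{Inclusion $\hm^2(X_0\setminus X^u_0)=0$.} This is the easy direction. The set $X\setminus X^u_0$ is countably 2-rectifiable, so $X_0\setminus X^u_0$ is a countably rectifiable subset of the purely unrectifiable set $X_0$. It is therefore $\hm^2$-null.

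\textbf{Inclusion $\hm^2(X^u_0\setminus X_0)=0$.} Here we must show that $X^u_0$ carries no rectifiable mass. Suppose $E\subset X^u_0\cap R$ has $\hm^2(E)>0$; we may take $E$ compact and contained in a bi-Lipschitz image of a compact subset of $\R^2$. I would use the curve-family content of Theorem \ref{thm:unrectifiable} to reach a contradiction. On a rectifiable piece, the bi-Lipschitz parametrization $\phi\colon K\to E$ with $K\subset\R^2$ compact yields a large family of curves meeting $E$ in positive length. Concretely, for a density point of $K$, almost every horizontal segment through a neighbourhood meets $K$ in positive length. Since $X$ is only a surface and $\phi$ is defined only on $K$, we would extend $\phi$ to a Lipschitz map on the neighbourhood into $X$ (or into $\ell^\infty$ and then work in a suitable way). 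Better is to argue via the uniformization map directly, as follows.

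\textbf{Key step, via $u$.} We work with the weakly quasiconformal map $u$ and the set $G_0$. Because $u$ is monotone, surjective and Sobolev with the area formula on $M\setminus G_0$, the fibres over $E$ lie essentially in $G_0$ up to null sets. Weak quasiconformality gives $\mod_2 (u(\Gamma))\le K\mod_2(\Gamma)$-type inequalities in the reverse direction (the modulus inequality $\mod \Gamma \le K \mod u\circ\Gamma$ for families in $X$ lifted to $M$, as in \cite{NR22}). Take the family $\Gamma_E$ of curves in $X$ meeting $E$ in positive length. From the rectifiable structure of $E$, $\Gamma_E$ has positive 2-modulus: a Fubini argument with the bi-Lipschitz chart shows that any admissible density $\rho$ must satisfy $\int_E\rho^2\gtrsim$ a positive constant. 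This is where the chart requires producing genuine curves in $X$. I would obtain them from Theorem \ref{thm:unrectifiable}'s proof machinery, or from the fact that $X$ is a surface with rectifiable $E$, using the coarea inequality on distance functions to get level curves crossing $E$. By contrast, Theorem \ref{thm:unrectifiable} says that $\Gamma_E\subset\{\gamma:\hm^1(|\gamma|\cap X^u_0)>0\}$ is exceptional, hence of zero 2-modulus. This is the desired contradiction.

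\textbf{Main obstacle.} The hardest step is producing a curve family of positive modulus that meets a given rectifiable set $E$ of positive area in positive length. I would first try level sets: take a Lipschitz coordinate function $\phi^{-1}_1$ extended by McShane to $X$. By the coarea inequality, almost every level set meets $E$ in positive $\hm^1$-measure for a positive-measure set of levels. By planar topology in the surface $X$, these level sets contain rectifiable curves, as used in Section \ref{sec:auxiliary}. A family of such curves, parametrized by the level, has positive modulus by the standard coarea-plus-Cauchy--Schwarz argument. Finally, independence from $u$ follows because $X_0$ was defined without reference to $u$.
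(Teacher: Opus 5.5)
\textbf{There is a genuine gap: the set you choose for $X_0$ is the wrong one.} Your easy direction is correct: $\hm^2(P\setminus X^u_0)=0$, since $X\setminus X^u_0$ is countably $2$-rectifiable. The paper records this too.

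The reverse inclusion $\hm^2(X^u_0\setminus P)=0$, however, is false in general. The introduction of the paper points out that the countably $2$-rectifiable metric surface of \cite{EIR22}*{Theorem 5.3} has $\hm^2(X^u_0)>0$, while its purely unrectifiable part $P$ is null. So your key step cannot be proved. That step claims that a rectifiable set $E$ with $\hm^2(E)>0$ is met in positive length by a curve family of positive $2$-modulus. In that example, $E=X^u_0$ is rectifiable with positive area, yet Theorem~\ref{thm:unrectifiable} says the family of curves meeting it in positive length is exceptional.

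The argument breaks where you pass from the chart to actual curves in $X$:
\begin{itemize}
\item The bi-Lipschitz chart $\phi\colon K\to E$ lives on a compact $K$ that may be nowhere dense. Its horizontal slices are therefore not curves in $X$.
\item Rectifiability of $E$ is a purely measure-theoretic property. It says nothing about how rectifiable curves of $X$ pass through $E$.
\item The McShane extension of a coordinate does have level sets meeting $E$ in positive $\hm^1$-measure for many levels. But nothing forces that portion to lie on rectifiable curves inside the level set. It can be spread over many components, each meeting $E$ in a null set.
\item The coarea inequality only gives upper bounds. So no lower bound on $\int\rho^2$ for admissible $\rho$ follows.
\end{itemize}

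\textbf{What is needed instead.} The singular part is detected by curves, not by rectifiability. The paper fixes one uniformization $u_0$, sets $X_0:=X^{u_0}_0$, and compares it with any other $u$ through the set $X_1$ of curve-regular points. Two facts are used.
\begin{enumerate}
\item Proposition~\ref{prop:F}: $\hm^2(X^u_0\cap X_1)=0$ for every $u$. At a curve-regular point $x$, the spheres $S(x,s)$ contain continua of diameter at least $s$. These are lifted through the monotone, proper map $u$ to curves in $D$. The Sobolev coarea formula applied to $d(x_j,u(\cdot))$, together with $|u^{-1}(X^u_0)|_2=0$ and a $5r$-covering, forces the Hausdorff content to vanish.
\item Lemma~\ref{lem:singular}: almost every point of $X\setminus X^{u_0}_0$ lies in $X_1$. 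This uses the images of horizontal segments under $u_0$ at points where the Jacobian is positive.
\end{enumerate}
Together these give
\[
\hm^2(X^u_0\setminus X_0)\le \hm^2(X^u_0\cap X_1)+\hm^2\bigl(X\setminus(X_0\cup X_1)\bigr)=0,
\]
and the other inclusion follows by symmetry. The intrinsic description you were after is therefore: up to $\hm^2$-null sets, $X_0$ is the complement of $X_1$, not the purely unrectifiable part.
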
 
We call $X_0$ the \emph{singular part} of $X$, and note that $X_0$ is unique up to a set of Hausdorff $2$-measure zero. 

From the definitions it follows that $\hm^2(X' \setminus X_0)=0$ for every metric surface $X$. The complementary property $\hm^2(X_0 \setminus X')=0$ is not always true: one can show that if $X$ is the countably $2$-rectifiable metric surface constructed in \cite{EIR22}*{Theorem 5.3}, then $\hm^2(X_0)>0$. Theorems \ref{thm:unrectifiable} and \ref{thm:singular} and Corollary \ref{cor:unrectifiable} show that although the singular and purely 2-unrectifiable parts of a metric surface $X$ may have positive area (see e.g.\ \cite{R17}*{Proposition~17.1}), they are nevertheless negligible from the perspective of curve families and modulus.

\medskip

\textbf{Structure of the article:} In Section \ref{section:prel} we collect the necessary definitions and basic statements. Section \ref{sec:auxiliary} is devoted to establishing a quadratic lower bound on area in metric surfaces. In Section \ref{sec:proof:intrinsic} we present the intrinsic proofs of Theorems \ref{thm:main} and \ref{thm:main-higher-dim}. In Section \ref{sec:proof-thm-unrectifiable} we provide a proof of Theorems~\ref{thm:unrectifiable} and \ref{thm:singular}. The proof of Theorem \ref{thm:main} through uniformization of metric surfaces is presented in Section \ref{sec:proof-uniformization}.

\medskip

A reader only interested in the intrinsic proofs of Theorems \ref{thm:main} and \ref{thm:main-higher-dim} may skip Sections \ref{section:modulus}, \ref{section:Sobolev}, \ref{section:diff}, 
\ref{section:uniformization}, as well as Sections \ref{sec:proof-thm-unrectifiable} and \ref{sec:proof-uniformization}.

\medskip

\textbf{Acknowledgments:} This work was partially supported by the Simons Foundation grant (award no.\ SFI-MPS-T-Institutes-00010825) and from State Treasury funds as part of a task commissioned by the Minister of Science and Higher Education under the project “Organization of the Simons Semesters at the Banach Center - New Energies in 2026-2028” (agreement no.\ MNiSW/2025/DAP/491). The second author was supported by the Research Council of Finland, Project number 360505. Parts of this research were conducted when the first author was visiting University of Jyväskylä. She wishes to thank the department for their friendliness and hospitality.  

We thank Urs Lang, Denis Marti, Dimitrios Ntalampekos, Matthew Romney and Elefterios Soultanis for comments on an earlier version of this manuscript. 
\section{Preliminaries} \label{section:prel}

\subsection{Basic definitions and notations}
Let $(X,d)$ be a metric space. We denote the open (resp., closed) ball in $X$ of radius $r>0$ centered at a point $x\in X$ by $B(x,r)$ (resp., $\bar{B}(x,r)$). The \emph{open unit disk} refers to $D:=B(0,1)\subset\R^2$.
The \emph{metric sphere} in $X$ of radius $r>0$ centered at $x\in X$ is the set $S(x,r):=\{y\in X:d(x,y)=r\}$.

A \emph{curve} is a continuous map $\gamma\colon I\to X$, where $I\subset\R$ is an interval. The image is indicated by $|\gamma|=\gamma(I)$. The \textit{length} of $\gamma$ is its total variation and is denoted by $\ell(\gamma)$. A curve $\gamma$ is \emph{rectifiable} if $\ell(\gamma)<\infty$ and \emph{locally rectifiable} if each of its compact subcurves is rectifiable. 
A curve $\gamma\colon I\to X$ is \emph{parametrized by arc length} if $\ell(\gamma|_J)=|J|_1$ for every interval $J\subset I$. Here and later on, $|\cdot|_n$ denotes the \emph{$n$-dimensional Lebesgue measure}.

\subsection{Hausdorff measures}\label{section:Hausdorff}
For a metric space $X$ and $n\in\N$, the \textit{Hausdorff $n$-measure} of a set $A \subset X$ is defined by
$$\mathcal{H}^n(A) = \lim_{\delta\to 0} \mathcal H^n_{\delta}(A),\,\, \textrm{where}\,\,\,\, \mathcal H^n_\delta(A) =\inf \left\{ \sum_{j=1}^\infty \frac{\omega_n}{2^n} \diam(A_j)^n\right\} $$
and the infimum is taken over all collections of sets $\{A_j\}_{j=1}^\infty$ such that $A \subset \bigcup_{j=1}^\infty A_j$ and $\diam(A_j) < \delta$ for each $j$. Here $\omega_n$ is a positive normalization constant, chosen so that $|U|_n=\mathcal{H}^n(U)$ for open subsets $U$ of $\R^n$. If we want to emphasize that $A$ is a subset of $X$, we write $\mathcal{H}_{X}^n(A)$ instead of $\mathcal{H}^n(A)$.

We  state the  coarea inequality for Lipschitz functions, which is a consequence of \cite{Fed69}*{Theorem 2.10.25}.
\begin{thm}\label{thm:coarea:classical}
    Let $X$ be a metric space and let $g\colon X\to [0,\infty]$ be a Borel function. If $u\colon X\to \R$ is a $L$-Lipschitz function, $L\ge 1$, then 
        \begin{align*}
            \int\displaylimits^* \int_{u^{-1}(t)} g\, d\mathcal H^1dt \leq \frac{4}{\pi}L \int_X g \, d\mathcal H^2.
        \end{align*}
\end{thm}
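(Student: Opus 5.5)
The plan is to read off the set version from Federer and then pass from indicator functions to general Borel $g$ by a countable decomposition, using only subadditivity of the upper integral.

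\emph{Step 1: the set version.} Federer's general coarea-type inequality \cite{Fed69}*{Theorem 2.10.25} states the following. For a Lipschitz map $u\colon X\to Y$ between metric spaces, any $A\subset X$ and integers $0\le k\le m$,
\[
\int\displaylimits^* \hm^{m-k}(A\cap u^{-1}(y))\,d\hm^k(y)\le (\operatorname{Lip} u)^k\,\frac{\alpha(m-k)\,\alpha(k)}{\alpha(m)}\,\hm^m(A),
\]
where $\alpha(j)$ is the volume of the unit ball in $\R^j$. We apply it with $Y=\R$, $m=2$, $k=1$ and $\operatorname{Lip} u\le L$.

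Two checks are needed here:
\begin{itemize}
\item The normalization $\omega_n/2^n\cdot\diam^n$ used in Section \ref{section:Hausdorff} agrees with Federer's normalization of $\hm^m$, and $\hm^1$ on $\R$ is Lebesgue measure. So the left-hand side is exactly $\int^*\hm^1(A\cap u^{-1}(t))\,dt$.
\item With $\alpha(0)=1$, $\alpha(1)=2$, $\alpha(2)=\pi$, the constant is $\alpha(1)\alpha(1)/\alpha(2)=4/\pi$.
\end{itemize}
This gives the claim for $g=\chi_A$ with $A$ Borel, with the bound $\frac4\pi L\,\hm^2(A)$. The hypothesis $L\ge1$ plays no role; it only fixes notation.

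\emph{Step 2: passage to Borel functions.} Write a Borel $g\colon X\to[0,\infty]$ as a countable sum $g=\sum_{i}c_i\chi_{A_i}$ with $c_i\in(0,\infty)$ and $A_i$ Borel. For instance, take $c_i=1/i$ and define $A_i$ greedily, so that
\[
A_i=\Bigl\{x: g(x)\ge \tfrac1i+\sum_{j<i}\tfrac1j\chi_{A_j}(x)\Bigr\}.
\]
This standard construction also handles the points where $g=\infty$.

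For each fixed $t$, monotone convergence for $\hm^1$ on the fibre $u^{-1}(t)$ gives
\[
\int_{u^{-1}(t)}g\,d\hm^1=\sum_i c_i\,\hm^1(A_i\cap u^{-1}(t)).
\]
The upper Lebesgue integral is positively homogeneous and countably subadditive on nonnegative functions. Hence
\[
\int\displaylimits^*\int_{u^{-1}(t)}g\,d\hm^1\,dt\le\sum_i c_i\int\displaylimits^*\hm^1(A_i\cap u^{-1}(t))\,dt\le\frac4\pi L\sum_i c_i\hm^2(A_i)=\frac4\pi L\int_X g\,d\hm^2,
\]
where the last equality is monotone convergence for $\hm^2$.

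\emph{Expected obstacle.} There is no real obstacle. The only delicate points are:
\begin{itemize}
\item Matching the normalization constants with Federer's, which yields precisely $4/\pi$.
\item Avoiding any measurability assumption on $t\mapsto\int_{u^{-1}(t)}g\,d\hm^1$. This is why the statement uses the upper integral, and why Step 2 relies only on subadditivity rather than on monotone convergence for it.
\end{itemize}
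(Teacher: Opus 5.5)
Your proposal is correct and follows the same route as the paper, which simply states the inequality as a consequence of \cite{Fed69}*{Theorem 2.10.25} without further argument. You also make explicit two routine points the paper leaves implicit: the normalization check $\alpha(1)^2/\alpha(2)=4/\pi$, and the passage from indicator functions to Borel $g$ via $g=\sum_i \frac{1}{i}\chi_{A_i}$ together with countable subadditivity of the upper integral.
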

Here, $\int^*$ denotes the upper integral, which is equal to the Lebesgue integral for measurable functions. In particular, $\int^*$ can be replaced by $\int$ in case of the right hand side being finite, see \cite{EIR22}*{Remark 2.12}. For a more general statement, we refer to \cite{EH21}.

The classical coarea formula for Sobolev functions below is stated in \cite{MalySwansonZiemer:Coarea} and attributed to Federer.
\begin{thm}\label{thm:coarea:Sobolev}
    Let $U\subset\R^2$ be open and let $g\colon U\to [0,\infty]$ be a Borel function that is finite almost everywhere. If $u\colon U\to \R$ is a continuous Sobolev function contained in $ W^{1,1}_{\text{loc}}(U)$, then
        \begin{align*}
            \int \int_{u^{-1}(t)}g\, d\mathcal H^1dt= \int_U g\cdot |\nabla u|\, d\mathcal H^2.
        \end{align*}
\end{thm}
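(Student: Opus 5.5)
The statement to prove is the coarea formula for Sobolev functions, Theorem \ref{thm:coarea:Sobolev}. Since it is classical, I would reduce it to the Lipschitz case and pass to the limit using monotone convergence. By monotone convergence in $g$, and by writing $g$ as an increasing limit of simple Borel functions, it suffices to treat $g=\chi_E$ with $E\subset U$ Borel. The identity then becomes
\[
\int \mathcal H^1(E\cap u^{-1}(t))\,dt=\int_E |\nabla u|\,d\mathcal H^2 .
\]
Exhausting $U$ by compactly contained open sets, we may also assume that $u\in W^{1,1}(U)$ and $|\nabla u|\in L^1(U)$.

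\textbf{Step 1: Lipschitz pieces.} I would use the Lusin-type decomposition of Sobolev functions. There are disjoint Borel sets $E_k$ and a null set $N$ with $U=N\cup\bigcup_k E_k$, such that $u|_{E_k}$ is Lipschitz. Extend $u|_{E_k}$ to a Lipschitz map $u_k$ on $\R^2$; then $\nabla u_k=\nabla u$ a.e. on $E_k$. For Lipschitz maps $\R^2\to\R$ the Federer coarea formula gives exactly
\[
\int \mathcal H^1(E\cap E_k\cap u^{-1}(t))\,dt=\int_{E\cap E_k}|\nabla u|.
\]
Here we use that $u_k=u$ on $E_k$, so the level sets agree on $E_k$. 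The constant is $1$ rather than $4/\pi$, since the target is Euclidean and the Jacobian is $|\nabla u|$. Summing over $k$ yields the formula with $E$ replaced by $E\setminus N$.

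\textbf{Step 2: the null set.} The main obstacle is to show that
\[
\int \mathcal H^1(N\cap u^{-1}(t))\,dt=0
\]
for the exceptional set $N$ with $|N|_2=0$. On the right-hand side, $N$ contributes nothing. This is where continuity of $u$ is essential. Note also that $g$ may take the value $\infty$ only on a null set, which again lies in such an $N$, so it falls under the same argument.

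I would first try to prove an inequality of Eilenberg/Theorem \ref{thm:coarea:classical} type for continuous $W^{1,1}$ functions: for every Borel $A$,
\[
\int^* \mathcal H^1(A\cap u^{-1}(t))\,dt\le C\int_A|\nabla u| .
\]
Taking $A=N$ then gives $0$. To get this inequality, cover $A$ by small squares $Q_i$ with $\sum_i\int_{Q_i}|\nabla u|$ close to $\int_A |\nabla u|$; this uses absolute continuity of the integral and outer regularity. On each $Q_i$, bound the oscillation of $u$ on level-set components in terms of $\int_{Q_i}|\nabla u|$, using continuity and the ACL property of $u$ along lines. Concretely, $\int \mathcal H^1_\delta(u^{-1}(t)\cap Q_i)\,dt$ is controlled by $\operatorname{diam}(Q_i)\cdot \operatorname{osc}_{Q_i} u$. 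For a continuous monotone-type argument one would use the estimate $\operatorname{osc}_{Q}u\lesssim \int_{Q'}|\nabla u|/\operatorname{diam} Q$, but that fails for general $W^{1,1}$ functions. For this reason I expect the hardest part to be choosing squares adapted via a maximal function or Vitali covering, or else invoking Malý–Swanson–Ziemer's argument. Their argument handles $W^{1,1}$ continuous functions through the $\mathcal H^1$-a.e. differentiability of the precise representative on almost every level set.

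Combining Step 1 and Step 2 and summing over the exhaustion gives the identity, finiteness of $g$ a.e. ensuring no $\infty\cdot0$ issues.
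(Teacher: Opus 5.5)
The paper does not prove this statement; it quotes it from Mal\'y--Swanson--Ziemer, who attribute it to Federer. Judged as a proof, your proposal has a genuine gap at Step 2, and that step carries the whole content of the theorem.

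Step 1 is correct. The Lusin decomposition, the Lipschitz extension and Federer's formula on each $E_k$ give $\int \mathcal H^1((E\setminus N)\cap u^{-1}(t))\,dt=\int_E|\nabla u|$. This yields only the inequality $\int \mathcal H^1(E\cap u^{-1}(t))\,dt\ge\int_E|\nabla u|$. The reverse inequality is equivalent to your Step 2, namely that the Lebesgue-null set $N$ meets almost every level set of $u$ in an $\mathcal H^1$-null set. But that is just the theorem applied to $g=\chi_N$. Likewise, once Step 1 is in hand, the Eilenberg-type inequality $\int^*\mathcal H^1(A\cap u^{-1}(t))\,dt\le C\int_A|\nabla u|$ that you propose to prove is a restatement of the theorem. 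So Step 1 does not reduce the difficulty; it only isolates it.

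You then give no argument for Step 2. The bound $\int \mathcal H^1_\delta(u^{-1}(t)\cap Q)\,dt\le \diam(Q)\operatorname{osc}_Q u$ is fine. Summing it, however, needs $\diam(Q)\operatorname{osc}_Q u\lesssim\int_{Q'}|\nabla u|$, which is false. A cone of height $h$ and radius $\rho$ inside a square of side $s\gg\rho$ gives $\diam(Q)\operatorname{osc}_Q u\approx sh$, against $\int|\nabla u|=\pi h\rho$.

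Choosing the squares adaptively by a Vitali or maximal-function selection does not fix this routinely. Such a selection needs, at every point of $N$, arbitrarily small balls satisfying the bound, and this can fail where thin spikes accumulate. Take disjoint cones of height $1/j$ and radius $4^{-j}$ centred at distance $2^{-j}$ from a point $x$. This gives a continuous $W^{1,1}$ function with
$r\operatorname{osc}_{B(x,r)}u\big/\int_{B(x,r)}|\nabla u|\to\infty$ as $r\to 0$, so no small ball centred at $x$ is good. You would therefore have to show that the set of such bad points is $\mathcal H^1$-negligible on almost every level set. That is a statement of the same type as Step 2, and it needs an idea your sketch does not contain.

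The fallback ``invoke Mal\'y--Swanson--Ziemer'' simply cites the result you set out to prove. As written, the proposal establishes only the inequality $\ge$.
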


The next density result follows from \cite{Fed69}*{Theorem 2.10.19(5)}. 

\begin{thm}\label{thm:regular}
    Let $X$ be a metric space of locally finite Hausdorff $n$-measure. Then there exists $E\subset X$, $\hm^n(E)=0$, so that
    \begin{eqnarray*} 
& & \limsup_{r \to 0}\frac{\hm^n(\bar{B}(x,r))}{\omega_n r^n}\leq 1 \quad \text{for every } x \in X \setminus E. 
    \end{eqnarray*}
\end{thm}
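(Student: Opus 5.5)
The statement to prove is Theorem \ref{thm:regular}: for $\hm^n$-a.e.\ $x$, the upper density of $\hm^n$ on closed balls, normalized by $\omega_n r^n$, is at most $1$. The plan is to deduce it from the density theorem \cite{Fed69}*{Theorem 2.10.19(5)}. In its standard form that theorem reads as follows: if $\mu$ is a measure on $X$, $A\subset X$, $\mu(A)<\infty$, and $\mu$ has upper density $\Theta^{*n}(\mu,x)\geq \lambda$ for every $x\in A$ in the sense of the Hausdorff-normalized quantity $\diam(S)^n\omega_n2^{-n}$, then $\lambda\,\hm^n(A)\leq \mu(A)$ (or the stronger $\mathcal{S}^n$ version).

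First, I fix how that upper density is expressed. Federer's upper density uses closed sets $S$ containing $x$ with $\diam S$ small. For $S=\bar B(x,r)$ we have $\diam S\le 2r$, hence $\omega_n2^{-n}(\diam S)^n\le \omega_n r^n$. So
$$\limsup_{r\to0}\frac{\hm^n(\bar B(x,r))}{\omega_n r^n}\le \limsup_{\diam S\to 0,\,x\in S}\frac{\hm^n(S)}{\omega_n 2^{-n}(\diam S)^n},$$
and Federer's statement 2.10.19(5) is exactly that the right-hand side is at most $1$ for $\hm^n$-a.e.\ point of any set of finite $\hm^n$-measure. Thus it suffices to apply that statement with $\mu=\hm^n\llcorner X$, restricted to pieces of finite measure.

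Second, I localize to sets of finite measure. Here I use separability, which I assume as Federer does; it holds for metric surfaces, the relevant case. Then $X$ is covered by countably many open sets $U_k$ with compact closures, or at least with $\hm^n(U_k)<\infty$; for the general statement I would take the sets to be open balls of finite measure. For $x\in U_k$ and small $r$ we have $\bar B(x,r)\subset U_k$, so the density computed with $\hm^n\llcorner U_k$ agrees with that of $\hm^n$. I then apply the density theorem with $\mu=\hm^n\llcorner U_k$ and $A=\{x\in U_k:\text{density}>1+1/j\}$. This gives $(1+1/j)\hm^n(A)\le \hm^n(A)$, and since $\hm^n(A)\leq\hm^n(U_k)<\infty$, we get $\hm^n(A)=0$. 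Taking the union over $j$ and $k$ yields the exceptional set $E$ of measure zero.

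I expect the main obstacle to be the localization step in a non-separable or non-locally-compact setting, together with measurability of $A$. Federer's density theorem does not need $A$ to be measurable, since it is an outer-measure statement, which avoids the latter issue. The former is handled by "locally finite" in the paper's sense, meaning finite on compact subsets, together with the surface being locally compact. In the generality stated, I would note that the claim is applied only to metric surfaces and to the spaces of Theorem \ref{thm:main-higher-dim}, and that separability of the support suffices.
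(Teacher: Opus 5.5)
Your argument follows the paper's route: the paper simply invokes \cite{Fed69}*{Theorem 2.10.19(5)}, and your localization to countably many open sets $U_k$ with $\hm^n(U_k)<\infty$ is the step the paper leaves implicit for passing from the restriction of $\hm^n$ to $U_k$ to $\hm^n$ itself; it is valid for metric surfaces, which are locally compact and second countable. There are two corrections, though. First, Federer's $\Theta^{*n}$ in 2.10.19 is already defined with closed balls $\bar B(x,r)$ and normalization $\omega_n r^n$, so (5) gives the required bound directly, and your detour through arbitrary closed sets $S\ni x$ misdescribes (5) (the arbitrary-set bound is true, but it is not what (5) states). Second, your remark that separability suffices is wrong when ``locally finite'' only means finite on compact sets: you need local compactness or finite-measure neighbourhoods, and without them the conclusion fails even in separable spaces. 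For example, one can build a comb-like subset of $\ell^2$ from Bernstein sets in which every compact subset is countable, yet every ball centred on a base set of positive $\hm^n$-measure has infinite measure.
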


\subsection{Differentiability of Lipschitz maps and rectifiability} \label{sec:metric-diff}
Let $X$ be a metric space. A map $f\colon\R^n\to X$ is \emph{strongly metrically differentiable} at $x\in\R^n$ if there is a (unique) seminorm $\md f_x$ on $\R^n$ such that $$\lim_{(y,z)\to(x,x)}\frac{d(f(y), f(z))- \md f_x(y-z)}{|y-x| + |z-x|}= 0.$$ 

\begin{thm}[\cite{Kir94}*{Theorem 2}]\label{thm:LipschitzMetricDiff}
    If $f\colon\R^n\to X$ is Lipschitz, then $f$ is strongly metrically differentiable at $\hm^n$-almost every $x\in\R^n$.
\end{thm}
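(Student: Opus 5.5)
The plan is to reduce to the scalar case through a countable family of distance functions, identify the candidate seminorm $\md f_x$ almost everywhere as a supremum of linear functionals, and finally upgrade directional differentiability to strong differentiability using Lusin/Egorov-type uniformity and Lebesgue density points.

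\emph{Reduction and candidate seminorm.} Since $f(\R^n)$ is separable, I will fix a countable dense set $\{x_i\}\subset f(\R^n)$ and put $g_i(y):=d(f(y),x_i)$. Each $g_i$ is $\mathrm{Lip}(f)$-Lipschitz. By density and the triangle inequality,
\begin{equation*}
d(f(y),f(z))=\sup_i |g_i(y)-g_i(z)|,
\end{equation*}
so $f$ is isometrically represented in $\ell^\infty$ by $(g_i)_i$. By Rademacher's theorem there is a null set $N_0$ off which every $g_i$ is differentiable. The candidate is
\begin{equation*}
\md f_x(v):=\sup_i |\langle \nabla g_i(x),v\rangle|.
\end{equation*}
As a supremum of absolute values of linear functionals bounded by $\mathrm{Lip}(f)|v|$, this is automatically a seminorm.

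\emph{Directional step.} For a fixed direction $v$ in a countable dense set $Q\subset\R^n$, I will first prove the one-dimensional statement. If $\gamma$ is a Lipschitz curve into $X$, then for a.e.\ $t$ the limit $\lim_{h\to0} d(\gamma(t+h),\gamma(t))/|h|$ exists and equals $\sup_i|(g_i\circ\gamma)'(t)|$.
\begin{itemize}
\item The lower bound $\ge$ is immediate from $d\ge|g_i(\cdot)-g_i(\cdot)|$.
\item For the upper bound, set $m(s):=\sup_i|(g_i\circ\gamma)'(s)|$. Then $|g_i(\gamma(t+h))-g_i(\gamma(t))|\le\int_t^{t+h} m$ for every $i$, so $d(\gamma(t+h),\gamma(t))\le\int_t^{t+h}m$.
\item At Lebesgue points of $m$ this gives the matching upper bound.
\end{itemize}
Applying this on lines parallel to $v$ and using Fubini, for a.e.\ $x$ and every $v\in Q$ we get
\begin{equation*}
\lim_{t\to0}\frac{d(f(x+tv),f(x))}{|t|}=\md f_x(v),
\end{equation*}
since $(g_i(\cdot+tv))'=\langle\nabla g_i,v\rangle$ a.e.\ on such lines.

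\emph{Strong differentiability (main obstacle).} I will fix $\varepsilon>0$ and a finite $\varepsilon$-net $Q_\varepsilon\subset Q\cap S^{n-1}$. By Egorov and Lusin, there is a compact set $K$ with $|\R^n\setminus K|_n<\varepsilon$ on which two things hold: the directional convergence above is uniform over $v\in Q_\varepsilon$, and $x\mapsto\md f_x$ is continuous, viewed as a map into seminorms with the sup-norm on the unit sphere. I then take $x$ to be a Lebesgue density point of $K$. Given $y,z$ close to $x$ with $\rho:=|y-x|+|z-x|$, the estimate proceeds as follows.
\begin{itemize}
\item Density lets me choose $w\in K$ with $|w-y|\le\varepsilon\rho$.
\item Write $z-w=s v+e$ with $v\in Q_\varepsilon$ and $|e|\le\varepsilon|z-w|$.
\item Uniform convergence at $w$ gives $d(f(w+sv),f(w))=\md f_w(sv)+o(\rho)$.
\item The Lipschitz property absorbs the errors $|w-y|$ and $|e|$.
\item Continuity of $\md f$ on $K$ together with the seminorm bound $|\md f_w(u)-\md f_w(u')|\le \mathrm{Lip}(f)|u-u'|$ replaces $\md f_w(sv)$ by $\md f_x(y-z)$ up to $O(\varepsilon\rho)$.
\end{itemize}
This yields $|d(f(y),f(z))-\md f_x(y-z)|\le C\varepsilon\rho$ for all $y,z$ sufficiently close to $x$. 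Letting $\varepsilon\to0$ along a sequence and intersecting the corresponding full-measure sets proves the theorem.

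The delicate point, which I expect to be the main obstacle, is the bookkeeping in this last step. Uniformity must be taken at the base point $w$ rather than at $x$, because $y$ itself need not lie in $K$. It is the density of $K$ at $x$ that makes this substitution possible.
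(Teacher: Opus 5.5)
Your argument is correct, and it is essentially Kirchheim's original proof of the cited theorem; the paper itself only quotes the result. As in that proof, you represent $f$ in $\ell^\infty$ through the distance functions $g_i$, take $\md f_x(v)=\sup_i|\langle\nabla g_i(x),v\rangle|$ (the norm of the $w^*$-derivative), identify it with the directional metric derivative via the one-dimensional argument and Fubini, and upgrade to strong differentiability using Egorov, Lusin, and density points of a large compact set, correctly basing the uniform estimate at a nearby point $w\in K$.

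Two pieces of bookkeeping need tidying. First, localize to balls before applying Egorov and Lusin. Second, your final step is not quite right as stated. For a fixed $\varepsilon$, the set of points where the estimate holds with tolerance $C\varepsilon$ is only shown to have complement of measure less than $\varepsilon$, not zero, so you cannot intersect "full-measure sets" directly. The fix uses monotonicity: the failure set at tolerance $C\varepsilon$ is contained in the failure set at tolerance $C\varepsilon'$ for every $\varepsilon'<\varepsilon$. It therefore has measure less than every such $\varepsilon'$ and is null. Intersecting the resulting full-measure good sets over $\varepsilon=1/k$ then gives the set of strong differentiability points.
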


In particular, every rectifiable curve $\gamma\colon[a,b]\to X$ is strongly metrically differentiable at almost every $t\in[a,b]$, and the \emph{metric derivative} 
$$|\gamma'|(t)=\lim_{\varepsilon\searrow0}\frac{d(\gamma(t+\varepsilon), \gamma(t-\varepsilon))}{2\varepsilon}$$
exists and agrees with $\md\gamma_t(1)$ for almost every $t\in[a,b]$. The length of $\gamma$ may be expressed as 
$$
\ell(\gamma)=\int_a^b|\gamma'|(t)\,dt. 
$$
Note that if $\gamma$ is arc length parametrized, then $|\gamma'|(t)=1$ for almost every $a<t<b$.

A set $A\subset X$ is called \emph{countably $n$-rectifiable} if, up to a set of $\hm^n$-measure zero, $A$ can be covered by countably many Lipschitz images of subsets of $\R^n$. A set $U\subset X$ is \emph{purely n-unrectifiable} if $\hm^n(U\cap A)=0$ for every countably $n$-rectifiable set $A\subset X$. Every metric surface $X$ can be written as the disjoint union of a countably $2$-rectifiable part $\widehat X$ and a purely
$2$-unrectifiable part $X'$, see \cite{Bate22}*{Lemma 2.2}.

\subsection{Modulus of curve families}\label{section:modulus}
Let $X$ be a metric space of locally finite Hausdorff $n$-measure, let $\Gamma$ be a family of curves in $X$ and let $p\ge1$. A Borel function $g\colon X \to [0,\infty]$ is \emph{admissible} for $\Gamma$ if $\int_{\gamma}g\, ds\geq 1$ for all locally rectifiable curves $\gamma\in \Gamma$. We define the $p$-\emph{modulus} of $\Gamma$ as 
$$\mod_p \Gamma = \inf_g \int_X g^p \, d\mathcal H^n,$$
where the infimum is taken over all admissible functions $g$ for $\Gamma$. If there are no admissible functions for $\Gamma$ we set $\mod_p \Gamma = \infty$. A curve family $\Gamma_0$ is \emph{$p$-exceptional} if $\mod_p(\Gamma_0)=0$. A property is said to hold for \emph{$p$-almost every} curve in $\Gamma$ if it holds for every curve in $\Gamma\setminus\Gamma_0$ for some $p$-exceptional family $\Gamma_0\subset \Gamma$. 

The next lemma follows by noticing that $g=\infty \cdot \chi_A$ is an admissible function for the exceptional curve family $\Gamma_0$ associated to a null set $A \subset X$, see Definition \ref{def:exceptional}.  

\begin{lemma}[\cite{HKST:15}*{Lemma 5.2.15}]
    Let $A \subset X$ be a set of $\hm^n$-measure zero and let $p\ge 1$. Then, for $p$-almost every curve $\gamma$ in $X$ we have that $\hm^1(|\gamma|\cap A) = 0$.
\end{lemma}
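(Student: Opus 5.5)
We argue directly from Definition \ref{def:exceptional} and the definition of $p$-modulus. Let $\Gamma_0$ be the family of curves $\gamma$ in $X$ for which $\hm^1(|\gamma|\cap A)>0$. The goal is to show $\mod_p\Gamma_0=0$ for every $1\le p<\infty$. The idea is that any curve in $\Gamma_0$ spends positive length inside $A$, so the weight $g=\infty\cdot\chi_A$ already has infinite line integral along it while costing nothing in the $L^p$ norm.

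First we check that $g$ is a legitimate candidate. We cannot assume $A$ is Borel, so we replace it by a Borel set $A'\supset A$ with $\hm^n(A')=0$. Such an $A'$ exists because $\hm^n$ is Borel regular: $\hm^n(A)=0$ gives, for each $k$, Borel sets $A_k\supset A$ with $\hm^n(A_k)<1/k$, and we may take $A'=\bigcap_k A_k$. Enlarging $A$ only enlarges $|\gamma|\cap A$, so it suffices to treat $A'$. Accordingly we set $g:=\infty\cdot\chi_{A'}$, which is a Borel function $X\to[0,\infty]$. Its cost is
\[
\int_X g^p\,d\hm^n=\infty\cdot\hm^n(A')=0,
\]
using the convention $\infty\cdot 0=0$.

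It remains to verify that $g$ is admissible, i.e.\ $\int_\gamma g\,ds\ge1$ for every locally rectifiable $\gamma\in\Gamma_0$. This is the one step needing care. It suffices to work with a compact subcurve carrying positive $\hm^1$-measure of $|\gamma|\cap A'$, and we parametrize that subcurve by arc length. The line integral is then
\[
\int_\gamma g\,ds=\int g(\gamma(t))\,dt.
\]
Since $\gamma$ is $1$-Lipschitz, $\hm^1(\gamma(E))\le |E|_1$ for every $E$. Taking $E=\gamma^{-1}(A')$ gives $\gamma(E)=|\gamma|\cap A'$, hence
\[
|\gamma^{-1}(A')|_1\ge \hm^1(|\gamma|\cap A')\ge\hm^1(|\gamma|\cap A)>0.
\]
So $g\circ\gamma=\infty$ on a set of positive length, which gives $\int_\gamma g\,ds=\infty\ge1$ and shows that $g$ is admissible for $\Gamma_0$.

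Combining the two steps, $\mod_p\Gamma_0\le\int_X g^p\,d\hm^n=0$, so $\Gamma_0$ is $p$-exceptional. In other words, $\hm^1(|\gamma|\cap A)=0$ for $p$-almost every curve $\gamma$ in $X$.
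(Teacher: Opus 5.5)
Your proof is correct and takes the same approach as the paper, which simply notes that $g=\infty\cdot\chi_A$ is admissible for the family and has zero $L^p$-cost. You also fill in two details the paper leaves implicit: enlarging $A$ to a Borel null set so that $g$ is Borel, and the arc-length computation showing that $\int_\gamma g\,ds=\infty$ for every locally rectifiable curve in the family.
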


\subsection{Metric Sobolev spaces}\label{section:Sobolev}
Let $f\colon X\to Y$ be a map between metric surfaces $X$ and $Y$. A Borel function $\rho \colon X\to [0,\infty]$ is an \textit{upper gradient} of $f$ if 
\begin{align}\label{ineq:upper_gradient}
    d_Y(f(x),f(y)) \leq \int_{\gamma} \rho \, ds
\end{align}
for all $x,y\in X$ and every rectifiable curve $\gamma$ in $X$ joining $x$ and $y$. If the \textit{upper gradient inequality} \eqref{ineq:upper_gradient} holds for $p$-almost every rectifiable curve $\gamma$ in $X$ joining $x$ and $y$ we call $\rho$ a \emph{$p$-weak upper gradient} of $f$. 

Let $L^p(X)$ ($L^p_{\text{loc}}(X)$) denote the \emph{space of (locally) $p$-integrable functions} from $X$ to $[-\infty,\infty]$. Here locally $p$-integrable means $p$-integrable on compact subsets.
The Sobolev space $N^{1,p}(X,Y)$ is the space of Borel maps $f \colon X \to Y$ with upper gradient $\rho \in L^p(X)$ such that $x \mapsto d_Y(y,f(x))$ is in $L^p(X)$ for some $y \in Y$. The space $N^{1,p}_\text{loc}(X, Y)$ is defined in the obvious manner. 

We refer to the monograph \cite{HKST:15} for more background on metric Sobolev spaces.

\subsection{Differentiability of Sobolev maps} \label{section:diff}
Let $X$ be a metric surface. The next result is a slight modification of 
\cite{LWarea}*{Proposition 4.3}, and follows from the proof given in \cite{LWarea}. The seminorm $\apmd h_z$ below is the \emph{approximate metric derivative} of $h$ at $z$. 

\begin{prop} \label{prop:appdiff}
Let $h \in N^{1,2}(D,X)$. For almost every $z \in D$ there exists a seminorm $\apmd h_z$ in $\R^2$ such that the following holds: There are pairwise disjoint compact sets $G_j \subset D$, $j=1,2,\ldots$, such that 
\begin{itemize} 
\item[(i)] $|G_0|_2=0$, where $G_0:=D \setminus \cup_{j=1}^\infty G_j$, 
\item[(ii)] if $j=1,2,\ldots$, then $h|_{G_j}$ is Lipschitz continuous and $\apmd h_z$ exists for every $z \in G_j$, and 
\item[(iii)] if $j=1,2,\ldots$, then for every $\varepsilon>0$ there exists $r_j(\varepsilon)>0$ such that 
$$ 
|d(h(z+v),h(z+w))-\apmd h_z(v-w)| \leq \varepsilon |v-w| 
$$ 
for every $z \in G_j$ and all $v,w \in \mathbb{R}^2$ with $|v|,|w| \leq r_j(\varepsilon)$ and such that $z+v,z+w \in G_j$.
\end{itemize}
\end{prop}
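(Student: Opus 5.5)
The plan is to combine a Lusin-type Lipschitz decomposition of $h$ with Kirchheim's Theorem~\ref{thm:LipschitzMetricDiff}, and then to upgrade pointwise differentiability to the uniform statement (iii) using Egorov and Lusin arguments. This follows the structure of the proof in \cite{LWarea}.

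\textbf{Step 1 (Lipschitz pieces).} Let $\rho\in L^2(D)$ be an upper gradient of $h$, extended by zero outside $D$, and let $M\rho$ be its Hardy--Littlewood maximal function. I will use the standard pointwise estimate for Newtonian maps:
$$d(h(z),h(w))\le C|z-w|\bigl(M\rho(z)+M\rho(w)\bigr)$$
for all $z,w$ outside a null set $N$. This comes from integrating the upper gradient inequality over a family of curves joining $z$ and $w$, for instance segments through a small ball. It follows that $h$ is $2Ck$-Lipschitz on each set $A_k=\{M\rho\le k\}\setminus N$, and $|D\setminus\bigcup_k A_k|_2=0$. After isometrically embedding $X$ into $\ell^\infty$ (Kuratowski), I extend $h|_{A_k}$ coordinatewise by McShane to a Lipschitz map $H_k\colon\R^2\to\ell^\infty$.

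\textbf{Step 2 (pointwise derivative).} By Theorem~\ref{thm:LipschitzMetricDiff}, each $H_k$ is strongly metrically differentiable almost everywhere. At Lebesgue density points $z$ of $A_k$ where $\md (H_k)_z$ exists, the seminorm is determined by the values of $h$ on $A_k$ near $z$, since density-one sets meet every small cone. It therefore agrees for different $k$, and I define $\apmd h_z:=\md (H_k)_z$.

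The map $z\mapsto \apmd h_z$, taking values in the finite-dimensional space of seminorms, is measurable. Indeed, it is a pointwise limit of difference quotients $d(H_k(z+tv),H_k(z))/t$ over $v$ in a countable dense set. For each $z$, set
$$\eta_k(z,r):=\sup\Bigl\{\frac{|d(h(z+v),h(z))-\apmd h_z(v)|}{|v|}: 0<|v|\le r,\ z+v\in A_k\Bigr\},$$
which tends to $0$ as $r\to0$ at almost every $z\in A_k$. By Egorov and Lusin, I then choose countably many pairwise disjoint compact sets $G_j$, each contained in some $A_k$, covering $D$ up to a null set. On each $G_j$ I require that:
\begin{itemize}
\item[(a)] $\eta_k(\cdot,r)\to0$ uniformly on $G_j$;
\item[(b)] $z\mapsto\apmd h_z$ is uniformly continuous on $G_j$;
\item[(c)] strong differentiability holds with a uniform rate, i.e.\ $|d(h(z+v),h(z+w))-\apmd h_z(v-w)|\le\varepsilon(|v|+|w|)$ for $|v|,|w|\le r(\varepsilon)$ and $z+v,z+w\in G_j$.
\end{itemize}
This gives (i) and (ii).

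\textbf{Step 3 (the uniform estimate (iii); main obstacle).} The difficulty is that strong differentiability only yields an error $\varepsilon(|v|+|w|)$, whereas (iii) demands $\varepsilon|v-w|$. Fix $\delta>0$ and split into two cases.
\begin{itemize}
\item If $|v-w|\ge\delta\max(|v|,|w|)$, then (c) with $\varepsilon\delta/2$ in place of $\varepsilon$ gives the bound $\varepsilon|v-w|$ directly.
\item If $|v-w|<\delta\max(|v|,|w|)$, recentre at the point $z':=z+w\in G_j$. Then (a) applied at $z'$ with increment $v-w$ gives
$$|d(h(z+v),h(z+w))-\apmd h_{z'}(v-w)|\le\varepsilon|v-w|$$
once $|v-w|$ is small. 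By (b), since $|z'-z|=|w|\le r$ is small, we have $|\apmd h_{z'}(v-w)-\apmd h_z(v-w)|\le\varepsilon|v-w|$.
\end{itemize}
Choosing $r_j(\varepsilon)$ as the minimum of the radii required in these steps yields (iii). The main care is to arrange all uniformities simultaneously on the same compact sets $G_j$. This bookkeeping, together with the measurability of the seminorm-valued map needed for Lusin's theorem, is the step I expect to need the most work.
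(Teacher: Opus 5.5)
Your proposal is correct and is essentially the argument the paper defers to: the Lytchak--Wenger proof, namely a Lusin-type Lipschitz decomposition of $h$ via the maximal function, Lipschitz extension into $\ell^\infty$, Kirchheim's Theorem~\ref{thm:LipschitzMetricDiff}, and Egorov/Lusin to make differentiability uniform and the derivative continuous on compact pieces. One simplification: recentring at $z'=z+w$ using only (a) and (b) already gives (iii) for all admissible $v,w$, with error $2\varepsilon|v-w|$, so you need neither the case split nor the uniform strong differentiability (c).
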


We will apply the following area formula. 

\begin{thm}[\cite{Kar07}*{Theorem 3.2}]\label{thm:area-formula}
    Let $h$ and $G_0$ be as in Proposition~\ref{prop:appdiff}. 
    If $E\subset D\setminus G_0$ is measurable, then 
    \begin{equation*}
       \int_E J(\apmd h_z) \,d A=\int_X N(x,h,E)\,d\mathcal H^2.
    \end{equation*}
\end{thm}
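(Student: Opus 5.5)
This is the area formula of Karmanova for approximately metrically differentiable Sobolev maps. I would prove it by reducing to the known area formula for Lipschitz maps from Euclidean sets into metric spaces (Kirchheim), using the decomposition given by Proposition~\ref{prop:appdiff}.

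\textbf{Reduction to a single piece.} Write $E=\bigcup_{j\ge1}(E\cap G_j)$, a disjoint union, since $E\subset D\setminus G_0$. Both sides are countably additive in $E$: the left side trivially, and the right side because $N(x,h,E)=\sum_j N(x,h,E\cap G_j)$ pointwise, followed by monotone convergence. It therefore suffices to prove the identity for a measurable $E\subset G_j$ with $j$ fixed.

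\textbf{The Lipschitz case.} On $G_j$ the map $h$ is Lipschitz by (ii). Extend $h|_{G_j}$ to a Lipschitz map $\tilde h\colon\R^2\to \ell^\infty$, after isometrically embedding a separable part of $X$ (the image $h(G_j)$ is separable) into $\ell^\infty$. Kirchheim's area formula then gives
\[
\int_E J(\md \tilde h_z)\,dA=\int N(x,\tilde h,E)\,d\hm^2,
\]
with $\md\tilde h_z$ the metric differential from Theorem~\ref{thm:LipschitzMetricDiff}. On $E$ the two maps agree, $\tilde h=h$, so $N(x,\tilde h,E)=N(x,h,E)$, and Hausdorff measure of subsets of $X$ is unchanged under the isometric embedding.

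\textbf{Identifying the derivatives.} It remains to show that $\md\tilde h_z=\apmd h_z$ for a.e.\ $z\in E$.
\begin{enumerate}[(a)]
\item Take $z$ to be a Lebesgue density point of $G_j$ at which $\tilde h$ is strongly metrically differentiable.
\item For any direction $v$ and small $t$, density lets us choose points $z+v'$ and $z+w'\in G_j$ with $|v'-tv|=o(t)$ and $|w'|=o(t)$.
\item Then (iii), combined with strong differentiability of $\tilde h$ and the fact that $\tilde h=h$ on $G_j$, gives $|\md\tilde h_z(tv)-\apmd h_z(tv)|\le \varepsilon t+o(t)$.
\end{enumerate}
Letting $\varepsilon\to0$ shows the seminorms coincide, hence so do their Jacobians.

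The main obstacle is this identification step: it needs the uniform approximation in (iii) together with the density argument, so that approximate derivatives along $G_j$ determine the full seminorm. The extension and measurability issues are routine.
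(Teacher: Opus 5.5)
Your proof is correct. It does not follow the paper, because the paper gives no argument for this statement and simply cites Karmanova's area formula for Sobolev maps into metric spaces.

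You instead derive the formula directly from Kirchheim's area formula for Lipschitz maps, using only the structure supplied by Proposition~\ref{prop:appdiff}:
\begin{enumerate}[(1)]
\item Countable additivity reduces the claim to a single compact piece $G_j$.
\item An isometric embedding of the compact set $h(G_j)$ into $\ell^\infty$, followed by a coordinatewise McShane extension, produces a Lipschitz map $\tilde h$ on $\R^2$. Hausdorff measure and multiplicity are unaffected by this step.
\item A density-point argument together with (iii) identifies $\md\tilde h_z$ with $\apmd h_z$ for a.e.\ $z\in G_j$. In step (b) you may simply take $w'=0$, since $z\in G_j$ already.
\end{enumerate}

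What your version gains is exactly the identification in step (3). The statement is phrased with the specific exceptional set $G_0$ and the specific seminorms $\apmd h_z$ of Proposition~\ref{prop:appdiff}, which is itself a modification of a result of Lytchak and Wenger. Citing Karmanova therefore tacitly requires that her exceptional set and her approximate differential match these objects. Your argument proves the formula for every measurable $E\subset D\setminus G_0$ exactly as stated, and it needs only (ii) and the pointwise form of (iii).

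The citation, in turn, is shorter and places the result in the general Sobolev framework, independent of any chosen decomposition. Your reduction to Kirchheim's theorem on Lipschitz pieces is the natural mechanism behind such Sobolev area formulas.
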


Here, $N(x,h,E)$ is the \emph{multiplicity}, i.e., the number of preimage points of $x$ in $E$ under $h$. Moreover, the \emph{Jacobian} $J(N_z)$ of a seminorm $N_z$ on $\R^2$ is zero if $N_z$ is not a norm and 
$$
J(N_z)={\pi}/{|\{x\in\R^2: N_z(y)\leq1\}|_2}
$$ 
otherwise. If we denote   
\begin{eqnarray*} 
   L_h(z)=\max\{\apmd h_z(v):|v|=1\}, \quad  
    l_h(z)=\min\{\apmd h_z(v):|v|=1\}, 
\end{eqnarray*}
then by \cite{MR23}*{Lemma 2.9} we have  
\begin{equation}\label{eq:disto}
2^{-1} L_h(z)l_h(z) \leq J(\apmd h_z) \leq 
2 L_h(z)l_h(z). 
\end{equation} 

\subsection{Uniformization of metric surfaces}\label{section:uniformization}
We now state a uniformization result which combines \cite{NR22}*{Theorem 1.2} and \cite{NR:21}*{Remark 7.2}. The dilatation bound \eqref{ineq:dilatationbound} does not appear in the statement, but follows from the proof of the \emph{weak quasiconformality} of $u$ given in \cite{NR:21}.

\begin{thm}\label{thm:uniformization}
    Let $X$ be a metric surface. For every $x_0 \in X$ there exist an open neighborhood $U \subset X$ and a continuous, surjective, proper and monotone map $u \in N^{1,2}(D,U)$ that satisfies $N(x,u,D)=1$ for almost every $x\in U$ and 
    \begin{equation} \label{ineq:dilatationbound} 
    L_u(z)^2 \leq \frac{4}{\pi} J(\apmd u_z) \quad \text{for almost every } z \in D. 
    \end{equation}
\end{thm}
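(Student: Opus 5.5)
The plan is to assemble the statement from \cite{NR22}*{Theorem 1.2} and \cite{NR:21}, and to extract the dilatation bound \eqref{ineq:dilatationbound} from the construction of $u$ rather than from any abstract modulus inequality.

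\textbf{The map.} Fix $x_0\in X$ and choose a closed Jordan domain $\overline{\Omega}\subset X$ containing $x_0$ in its interior; since $X$ has locally finite $\hm^2$-measure, we have $\hm^2(\overline\Omega)<\infty$. The construction of \cite{NR22}*{Theorem 1.2}, built on \cite{NR:21}, then gives a continuous, surjective, monotone map $u\in N^{1,2}(D,U)$ onto $U=\Omega$. It is proper because it extends continuously to $\overline D\to\overline\Omega$, mapping $\partial D$ onto $\partial\Omega$.

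\textbf{Injectivity almost everywhere.} The statement $N(x,u,D)=1$ for almost every $x\in U$ is \cite{NR:21}*{Remark 7.2}. I would recall the reason as follows:
\begin{itemize}
\item monotonicity makes every fibre $u^{-1}(x)$ a continuum;
\item any nondegenerate fibre meets $D\setminus G_0$ only in points where $\apmd u_z$ is not a norm;
\item the area formula, Theorem~\ref{thm:area-formula}, shows that the image of the set where $J(\apmd u_z)=0$ has $\hm^2$-measure zero.
\end{itemize}
Together with surjectivity, this gives exactly one preimage for almost every $x\in U$.

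\textbf{The dilatation bound.} In \cite{NR:21}, $u$ is obtained as a limit (or minimizer) of maps minimizing the Reshetnyak energy $\int_D L_u(z)^2\,dA$ within an admissible class. A minimizer is \emph{infinitesimally isotropic} almost everywhere in the sense of Lytchak--Wenger: at almost every $z$, the seminorm $\apmd u_z$ is either trivial, or the Euclidean unit circle is the ellipse of maximal area (the John ellipse) contained in the unit ball of $\apmd u_z$. Otherwise, precomposing $u$ with a linear perturbation concentrated near $z$ would lower the energy.

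Given isotropy, the bound is a finite-dimensional norm estimate, and I would check it via the dual norm:
\begin{itemize}
\item Isotropy puts the unit ball $B$ of $\apmd u_z$ inside the disc of radius $\sqrt2$.
\item The sharp inequality should instead come from the fact that $B$ contains the convex hull of the unit circle and the points $\pm v/L$, where $v$ is a direction realizing $L=L_u(z)$.
\item Comparing $|B|_2$ with $\pi/L^2$ then gives $L^2\le \frac4\pi\cdot \pi/|B|_2=\frac4\pi J(\apmd u_z)$.
\end{itemize}

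\textbf{Main obstacle.} The hard part is justifying that the particular $u$ produced in \cite{NR22} really is infinitesimally isotropic, or otherwise satisfies the pointwise inequality. The construction there passes through approximations and limits, and pointwise quantities such as $L_u$ and $J$ do not obviously pass to the limit.

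What I would try first is to avoid the limit. From the proof in \cite{NR:21} I would extract the weak quasiconformality in the form $\mod_2\Gamma\le \frac4\pi\mod_2 u(\Gamma)$ for curve families $\Gamma$ in $D$. I would then localize at a point $z\in G_j$ where Proposition~\ref{prop:appdiff}(iii) holds, using the family of segments parallel to a maximal-stretch direction in a small square around $z$. Using (iii) to compare lengths, together with the area formula and injectivity, the test function $\rho=\chi/(\text{length})$ on the image should give $L_u(z)^2\le \frac4\pi J(\apmd u_z)$ after letting the square shrink.
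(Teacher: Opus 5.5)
Your skeleton is the paper's. The paper gives no argument beyond citing \cite{NR22}*{Theorem 1.2} and \cite{NR:21}*{Remark 7.2}, and it reads \eqref{ineq:dilatationbound} off the proof of weak quasiconformality in \cite{NR:21}. Your final plan for the dilatation bound is genuinely different: you derive the pointwise inequality from the modulus inequality $\mod_2\Gamma\le\frac4\pi\mod_2 u(\Gamma)$ alone by localizing, i.e.\ a geometric-to-analytic argument. This is a sound alternative that uses only the conclusion of the uniformization theorem rather than the internals of its proof. As sketched, however, it does not close. The area formula and injectivity do not control $\hm^2(u(Q_r))$, because the piece $u(Q_r\cap G_0)$ is invisible to Theorem~\ref{thm:area-formula}, and $u(G_0)=X^u_0$ can have positive area.

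The missing step is to work with $\mu(E):=\hm^2(u(E))$. This is a locally finite Borel measure on $D$ precisely because $N(x,u,D)=1$ a.e. Its absolutely continuous part is $J(\apmd u_z)\,dA$ and its singular part is carried by $G_0$. So by differentiation of measures, $\mu(Q_r)/r^2\to J(\apmd u_z)$ for a.e.\ $z$, where $Q_r$ is a square of side $r$ centred at $z$ with a side parallel to a maximal-stretch direction $v$. Choose $z$ also to be a density point of $G_j$. Then Fubini shows that the segments parallel to $v$ meeting $G_j$ within $\eta r$ of both endpoints still form a family of modulus at least $1-o(1)$. On these segments Proposition~\ref{prop:appdiff}(iii) gives $\ell(u\circ\gamma)\ge (L_u(z)-\varepsilon)(1-2\eta)r$. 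Testing with $\chi_{u(Q_r)}$ divided by this number gives $\frac{\pi}{4}(1-o(1))(L_u(z)-\varepsilon)^2(1-2\eta)^2\le \mu(Q_r)/r^2$, and letting $r\to0$, then $\eta,\varepsilon\to0$, yields \eqref{ineq:dilatationbound}.

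Several side claims are wrong and should be dropped.
\begin{itemize}
\item Your reconstruction of $N(x,u,D)=1$ a.e.\ does not work. Proposition~\ref{prop:appdiff}(iii) only constrains points of $G_j$, so it does not yield your second bullet. More seriously, nondegenerate fibres lying entirely in $G_0$ are not addressed at all, and their images may form a set of positive area inside $X^u_0$. This fact has to be taken from \cite{NR:21}*{Remark 7.2}, as the paper does.
\item The map of \cite{NR:21} and \cite{NR22} is not obtained by minimizing Reshetnyak energy. It comes from polyhedral approximation and a limiting argument, so infinitesimal isotropy is not available.
\item Even granting isotropy, your finite-dimensional check runs the wrong way. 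Knowing that the unit ball $B$ contains a given convex hull bounds $|B|_2$ from below, whereas you need $|B|_2\le 4/L_u(z)^2$. That bound holds because the John disc of $B$ then has radius $1/L_u(z)$, and among symmetric convex sets with a given John disc the circumscribed square has maximal area.
\end{itemize}
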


Here $u$ is \emph{proper} if the preimage of each compact set is compact, and \emph{monotone} if the preimage of each point is connected. 

Let $X$ be a metric surface homeomorphic to $D$, and fix a map $u:D \to X$ satisfying the conclusions of Theorem \ref{thm:uniformization}.  We apply Proposition \ref{prop:appdiff} to obtain disjoint sets $G_j$ such that $|G_0|_2=0$ and $u|_{G_j}$ is Lipschitz continuous for every $j=1,2,\ldots$. 
\begin{definition} \label{def:singular}
We call $X^u_0:=u(G_0)$ the \emph{singular-set image} and $X \setminus X^u_0$ the \emph{rectifiable image} of $u$. 
\end{definition}

\section{Quadratic lower bound on area}\label{sec:auxiliary}
In this section, we assume that $X$ is a metric surface and prove an area bound which can be applied near $\gamma(t_0)$ when $t_0$ is a point of strong metric differentiability of a curve $\gamma$ in $X$. Namely, we establish the following result. Similar underlying ideas appear in \cite{NR22}*{Proof of Theorem 3.4}.
\begin{prop}\label{prop:inf-H^2-lower-bd}
    Let $k\ge 8$ be given and let $\gamma\colon[-\varepsilon_0,\varepsilon_0]\to X$ be a curve parametrized by arc length that satisfies 
    \begin{equation}\label{ineq:strong-diff}
        d(\gamma(t), \gamma(-t))\geq(2-k^{-1})t
    \end{equation}
    for every $0<t <\varepsilon_0$. There exists $\varepsilon_1>0$ 
    such that if $0<\varepsilon<\varepsilon_1$ and $\nicefrac{-\varepsilon}{2}<t_0<\nicefrac{\varepsilon}{2}$, then 
    $$\hm^2(B(\gamma(t_0),\delta))\geq\frac{\delta^2}{8},$$
    where  $\delta:=2\varepsilon k^{-1}$. 
\end{prop}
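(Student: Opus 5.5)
The plan is to apply the coarea inequality, Theorem~\ref{thm:coarea:classical}, to the $1$-Lipschitz function $u(y):=d(y,x_0)$ with $x_0:=\gamma(t_0)$ and $g:=\chi_{B(x_0,\delta)}$. This gives
\[
\int_0^\delta \hm^1(S(x_0,s))\,ds\le \frac{4}{\pi}\,\hm^2(B(x_0,\delta)),
\]
so it suffices to bound $\hm^1(S(x_0,s))$ from below, linearly in $s$, for $s$ in a range of length comparable to $\delta$.

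\emph{Step 1: $\gamma$ is nearly a geodesic near $x_0$.} Since $\gamma$ is parametrized by arc length, it is $1$-Lipschitz. Let $a,b\ge0$ with $|t_0|+\max(a,b)<\varepsilon_0$, and put $T:=|t_0|+\max(a,b)$. Combining \eqref{ineq:strong-diff} at $T$ with the triangle inequality along the pieces of $\gamma$ between $\gamma(\pm T)$ and $\gamma(t_0+a)$, $\gamma(t_0-b)$ gives
\[
d(\gamma(t_0+a),\gamma(t_0-b))\ge (2-k^{-1})T-2(T-\max(a,b))-|a-b|\ge a+b-T/k .
\]
Since $|t_0|<\varepsilon/2$ and $\delta=2\varepsilon k^{-1}$, we have $T/k\le \delta/4+\max(a,b)/k$. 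The same argument with $b=0$ gives $d(x_0,\gamma(t_0+a))\ge a-\delta/4-a/k$, and symmetrically for $\gamma(t_0-b)$. The consequence I will use is this: if $s\in(\delta/2,\delta)$ and $a$ (resp.\ $b$) is the first parameter at which $\gamma(t_0+\cdot)$ (resp.\ $\gamma(t_0-\cdot)$) reaches $S(x_0,s)$, then $s\le a,b\le s+O(\delta)$. In particular, these points are at mutual distance at least $2s-c\delta$, with $c$ made explicit using $k\ge8$ (roughly $c=\tfrac12$).

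\emph{Step 2: topology.} Choose $\varepsilon_1$ so small that $\bar B(\gamma(0),4\varepsilon_1)$ lies in a closed topological disk $W\subset X$ whose boundary is at positive distance from it; this uses only that $X$ is a $2$-manifold. For $s<\delta$, planar topology, e.g.\ the boundary of the component of $W\setminus S(x_0,s)$ containing $x_0$ together with Janiszewski/unicoherence of the disk, yields a continuum $K_s\subset S(x_0,s)$ that separates $x_0$ from $\partial W$. The arcs $\gamma|_{[t_0,t_0+\varepsilon/2]}$ and $\gamma|_{[t_0-\varepsilon/2,t_0]}$ start at $x_0$ and end at distance greater than $\delta$ from $x_0$ (by Step 1). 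Hence each meets $K_s$ at a point whose parameter is controlled as in Step 1. The continuum $K_s$ therefore contains two points at distance at least $2s-c\delta$, so
\[
\hm^1(S(x_0,s))\ge \hm^1(K_s)\ge \diam K_s\ge 2s-c\delta .
\]
A small care point: $\gamma$ might meet $K_s$ at a parameter larger than the first hitting time of $S(x_0,s)$. The lower bound $a-\delta/4-a/k\le s$ from Step 1 still gives $a\le (s+\delta/4)/(1-k^{-1})$ for any such parameter, so this causes no difficulty.

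\emph{Step 3: integrate.} With $c=\tfrac12$,
\[
\int_{\delta/2}^{\delta}\Bigl(2s-\tfrac{\delta}{2}\Bigr)\,ds=\tfrac{\delta^2}{2},
\]
and hence
\[
\hm^2(B(x_0,\delta))\ge \tfrac{\pi}{4}\cdot\tfrac{\delta^2}{2}=\tfrac{\pi\delta^2}{8}\ge \tfrac{\delta^2}{8}.
\]
This leaves considerable slack for a worse constant $c$.

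I expect the main obstacle to be Step 2: producing a connected subset of the metric sphere that separates $x_0$ from far points and is forced to meet $\gamma$ on both sides of $x_0$. This requires care because $X$ is only a topological surface and spheres may be wild. I would first try to reduce to the plane via a chart disk and invoke a standard separation theorem, for instance that in $S^2$ a closed set separating two points contains a separating continuum.
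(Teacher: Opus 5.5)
Your coarea reduction and the estimates in Step 1 are correct (they give $c=3/7$), and Step 3 then goes through. The gap is in Step 2, at the sentence ``Hence each meets $K_s$.''

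\textbf{Why the step fails.} Your continuum $K_s$ only separates $x_0$ from $\partial W$. The arcs $\gamma|_{[t_0,t_0+\varepsilon/2]}$ and $\gamma|_{[t_0-\varepsilon/2,t_0]}$ are only known to end outside $\bar B(x_0,s)$. The ball can have holes, meaning components of $W\setminus \bar B(x_0,s)$ that do not reach $\partial W$. An endpoint lying in such a hole is on the same side of $K_s$ as $x_0$.

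With your $\varepsilon_1$, which depends only on the chart $W$ and not on $\gamma$, this really happens. Glue to $\R^2$, along a circle of radius $\rho$, a capped cylinder of length larger than $\varepsilon_0$, and take the intrinsic metric. Let $\gamma$ run along a straight line to a point $\gamma(0)$ of the gluing circle and then straight up the cylinder.
\begin{itemize}
\item $\gamma$ is a geodesic, so \eqref{ineq:strong-diff} holds.
\item Once $4\varepsilon_1>\pi\rho$, the ball $\bar B(\gamma(0),4\varepsilon_1)$ contains a whole circle of the cylinder. Hence every admissible $W$ contains the entire cylinder.
\item Take $\pi k\rho<\varepsilon<\varepsilon_1$, $t_0=0$ and $\delta/2<s<\delta$. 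Then $S(x_0,s)$ consists of a large loop in the plane and a small loop around the cylinder at height about $s$.
\item The forward arc meets $S(x_0,s)$ exactly once, on the small loop. So it meets no continuum in $S(x_0,s)$ that separates $x_0$ from $\partial W$.
\end{itemize}
Thus your diameter bound for $K_s$ is not established by the argument you give.

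\textbf{What is missing.} You need a choice of $\varepsilon_1$ tied to $\gamma$, so that the far ends of $\gamma$ can be reached without passing near $\gamma(0)$. The paper does this as follows.
\begin{itemize}
\item It picks $\varepsilon_1$ so that $\gamma(\varepsilon_0)$ and $\gamma(-\varepsilon_0)$ are joined by a path $\gamma_{z_2,z_1}$ in $X\setminus\bar B(\gamma(0),\varepsilon_1)$. This is possible since $X\setminus\{\gamma(0)\}$ is connected.
\item By Moore's theorem it takes a continuum $\Gamma(s)\subset S(x_0,s)$ separating $x_0$ from the curve point $\gamma(\varepsilon)$. So $\gamma|_{[t_0,\varepsilon]}$ must hit $\Gamma(s)$.
\item It gets a hit on the backward side from the path $\gamma|_{[\varepsilon,\varepsilon_0]}\ast\gamma_{z_2,z_1}\ast\gamma|_{[-\varepsilon_0,t_0]}$.
\end{itemize}
This also needs $d(x_0,\gamma(t))>s$ for $\varepsilon\le|t|\le\varepsilon_0$. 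Your Step 1 does not give this once $|t_0|+a>\varepsilon_0$, and the paper proves it separately in \eqref{ineq:sepi}.

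\textbf{How to repair your version.} Shrink $\varepsilon_1$ so that $\gamma(\pm\varepsilon_0)$ can be joined to $\partial W$ within $W$ while avoiding $\bar B(\gamma(0),\varepsilon_1)$. Then follow $\gamma|_{[t_0,\varepsilon_0]}$, resp.\ $\gamma|_{[-\varepsilon_0,t_0]}$ reversed, followed by these paths, until they first reach $\partial W$. With the far-range estimate, the hits on $K_s$ then lie on $\gamma|_{(-\varepsilon,\varepsilon)}$, where your Step 1 parameter control applies.
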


\begin{proof} 
We denote $x:=\gamma(0)$, $z_1:=\gamma(-\varepsilon_0)$, and $z_2:=\gamma(\varepsilon_0)$. Since $X$ is a connected topological surface, there are $0<\varepsilon_1<\nicefrac{\varepsilon_0}{2}$ and a path $\gamma_{z_2,z_1}$ in $X \setminus \bar{B}(x,\varepsilon_1)$ starting at $z_2$ and ending at $z_1$. We fix 
$0<\varepsilon < \varepsilon_1$ and $\nicefrac{-\varepsilon}{2}<t_0<\nicefrac{\varepsilon}{2}$. 

We denote $\delta:=2\varepsilon k^{-1}$, fix $\nicefrac{\delta}{2} < s < \delta$, and claim that 
\begin{equation} \label{ineq:bigcont}
\hm^1(S(\gamma(t_0),s)) \geq s.
\end{equation}

To prove \eqref{ineq:bigcont}, we first notice that since $\gamma$ is parametrized by arc length, Condition \eqref{ineq:strong-diff} gives 
\begin{align}\label{ineq:sepi}
    d(\gamma(t_0),\gamma(t))
    &\ge d(\gamma(-t),\gamma(t))
      - d(\gamma(-t),\gamma(-t/2))
      - d(\gamma(-t/2),\gamma(t_0))
      \notag\\
    &\ge (2-k^{-1})t
      -\frac{t}{2}
      -t
      =\left(\frac12-k^{-1}\right)t
      > s 
\end{align} 
for every $\varepsilon \leq t<\varepsilon_0$; here the last inequality holds since $k \geq 8$. In particular, $S(\gamma(t_0),s)$ separates $\gamma(t_0)$ and $\gamma(\varepsilon)$. By \cite{Moore62}*{IV Theorem 26}, $S(\gamma(t_0),s)$ contains a continuum $\Gamma(s)$ which also separates $\gamma(t_0)$ and $\gamma(\varepsilon)$. It follows that there is $t_0<b<\varepsilon$ such that $\gamma(b) \in \Gamma(s)$. 

We now consider the concatenated path  
$$
\eta:=\gamma|_{[\varepsilon,\varepsilon_0]} \ast \gamma_{z_2,z_1} \ast \gamma|_{[-\varepsilon_0,t_0]}, 
$$ 
which connects $\gamma(\varepsilon)$ and $\gamma(t_0)$. 
Since $\Gamma(s)$ separates $\gamma(\varepsilon)$ and $\gamma(t_0)$, $\eta$ intersects $\Gamma(s)$. A computation analogous to \eqref{ineq:sepi} shows that 
$$
d(\gamma(t_0),\gamma(t)) \geq s \quad \text{for every } 
-\varepsilon_0<t\leq -\varepsilon. 
$$
We conclude that $\gamma|_{[-\varepsilon_0,-\varepsilon]}$ does not intersect $\Gamma(s)$. But neither do $\gamma|_{[\varepsilon,\varepsilon_0]}$ (by \eqref{ineq:sepi}), nor $\gamma_{z_2,z_1}$ (by the choice of $s$), so there must be $-\varepsilon < a < t_0$ such that $\gamma(a) \in \Gamma(s)$. 
    
	As $\gamma$ is parametrized by arc length and $\gamma(a),\gamma(b) \in  S(\gamma(t_0),s)$, we get 
    \begin{equation}\label{eq:lem:t+-t-}
        b-a=b-t_0+t_0-a\geq 2s.
    \end{equation} 
    Condition \eqref{ineq:strong-diff}, the triangle inequality and parametrization of $\gamma$ by arc length furthermore imply that 
	\begin{equation}\label{eq:lem:bd-from-diff}
    \begin{split}
		(2-k^{-1})\varepsilon&\le d(\gamma(-\varepsilon),\gamma(\varepsilon))\\
        &\le d(\gamma(-\varepsilon),\gamma(a))+d(\gamma(a),\gamma(b))+d(\gamma(b),\gamma(\varepsilon))\\
		&\le a-(-\varepsilon)+d(\gamma(a),\gamma(b))+\varepsilon-b. 
    \end{split}
	\end{equation}
    Combining \eqref{eq:lem:t+-t-} and \eqref{eq:lem:bd-from-diff}, we obtain
   $$
    d(\gamma(a),\gamma(b))\ge b-a-k^{-1}\varepsilon\ge 2s-k^{-1}\varepsilon\ge s. 
   $$
    Therefore, since $\Gamma(s)$ is a continuum, we have 
     \begin{equation} \label{ineq:later} 
    \hm^1(S(\gamma(t_0),s)) \geq \hm^1(\Gamma(s)) \geq \diam \Gamma(s) \geq d(\gamma(a),\gamma(b)) \geq s,
    \end{equation}
    and our claim \eqref{ineq:bigcont} follows. 

    The coarea inequality (Theorem \ref{thm:coarea:classical}), applied to the distance function $y \mapsto d(\gamma(t_0),y)$, and \eqref{ineq:bigcont} now yield
    $$\hm^2(B(\gamma(t_0),\delta))\ge\frac{\pi}{4}\int_{\nicefrac{\delta}{2}}^{\delta}s\,ds\ge\frac{\delta^2}{8}. $$
    The proof is complete.  
\end{proof}

\section{Intrinsic proof of Theorem \ref{thm:main} and higher dimensions}\label{sec:proof:intrinsic}
This section is devoted to the intrinsic proof of the main theorem (Theorem \ref{thm:main}) and its higher dimensional version (Theorem \ref{thm:main-higher-dim}). 

\begin{proof}[Proof of Theorem \ref{thm:main}]
Let $X$ and $Y$ be metric surfaces, and let $f\colon X\to Y$ be area-preserving and $L$-Lipschitz, $L\ge1$. The upper bound of inequality \eqref{ineq:BLD} follows directly from the $L$-Lipschitz property of $f$. We are left to show the lower bound in \eqref{ineq:BLD}. By Theorem \ref{thm:regular} and the area-preservation of $f$, the set $$E:=\left\{x\in X:
\limsup_{r \to 0}\frac{\hm_Y^2(\bar{B}(f(x),r))}{\pi r^2}> 1\right\} $$
is of Hausdorff 2-measure zero. Here, measurability of $E$ holds as $E$ is the preimage of a zero set under a Lipschitz map. By definition, the family $\Gamma_0$ of all curves $\gamma$ in $X$ satisfying $\hm^1(|\gamma|\cap E)>0$ is exceptional. 

It suffices to prove the lower bound in inequality \eqref{ineq:BLD} for any rectifiable curve $\gamma$ that is not contained in $\Gamma_0$. Let $\gamma\colon[a,b]\to X$ be such a curve and, without loss of generality, assume that $\gamma$ is parametrized by arc length. By Theorem \ref{thm:LipschitzMetricDiff}, both $\gamma$ and $f\circ \gamma$ are strongly metrically differentiable at almost every $t\in[a,b]$. Thus, the set $J$ of all $t\in [a,b]$ with $\gamma(t)\in X\setminus E$ and both $\gamma$ and $f\circ \gamma$ are strongly metrically differentiable at $t$ is of full measure.
In particular, if we can show that there exists $C>0$ with
\begin{equation}\label{eq:proof-main-1:C}
    |(f\circ\gamma)'|(t)\ge C
\end{equation}
for every $t\in J$, then $$\ell(f\circ\gamma)=\int_a^b|(f\circ\gamma)'|(t)\,dt\ge C\,(b-a)=C\,\ell(\gamma).$$
This provides the desired lower bound in inequality \eqref{ineq:BLD}. We are left to show that \eqref{eq:proof-main-1:C} holds for a uniform constant $C=C(L)>0$.

Let $k:=10^4L^3$. We fix $t\in J$ and, without loss of generality, assume that $t=0$. As $\gamma$ is parametrized by arc length and strongly metrically differentiable at $t=0$, we get $|\gamma'|(0)=1$. 
In particular, by the definition of metric derivative, there exists $\varepsilon_0>0$ such that Condition \eqref{ineq:strong-diff} is satisfied for every $0<\varepsilon <\varepsilon_0$. 
Set $x=\gamma(0)$, and let $\varepsilon_1$ be the number in Proposition~\ref{prop:inf-H^2-lower-bd}. Choose $0<\varepsilon < \varepsilon_1$ so that
\begin{equation}\label{ineq:H^2-lower-bound}
	\hm^2(B(f(x),R))\leq 4 R^2
\end{equation}
holds for every $0< R <\varepsilon$. This is possible as $x$ is not contained in $E$. We set $$m:=10^3L^2\quad\text{and}\quad\delta:= 2\varepsilon k^{-1} L=2\varepsilon 10^{-4}L^{-2}.$$ 

Since $d(\gamma(-\varepsilon/2),\gamma(\varepsilon/2)) \geq (1-(2k)^{-1})\varepsilon$ by Condition \eqref{ineq:strong-diff}, we may choose $-\nicefrac{\varepsilon}{2}<t_1<\dots<t_m<\nicefrac{\varepsilon}{2}$ such that the balls
$$B_j:=B(\gamma(t_j),\delta/L)$$ are pairwise disjoint. The assumptions on $\gamma|_{(-\varepsilon_0,\varepsilon_0)}$ and on any ball $B_j$ are such that Proposition~\ref{prop:inf-H^2-lower-bd} may be applied to show that  
\begin{equation}\label{eq:proof-main-1:area-balls}
    \hm^2(B_j)\ge\frac{\delta^2}{8L^2}
\end{equation}
holds for every $j\in\{1,\dots,m\}$.
We define $\psi_\varepsilon:=f\circ\gamma|_{(-\varepsilon,\varepsilon)}$ and claim that
\begin{equation}\label{eq:proof-main-1:diam-image}
    \diam(|\psi_\varepsilon|)\ge \delta.
\end{equation}

Suppose towards a contradiction that inequality \eqref{eq:proof-main-1:diam-image} does not hold. Then, as $f$ is $L$-Lipschitz, it follows that 
$$B(f(x),3\delta)\supset f(\cup_{j=1}^m B_j).$$
Inequalities \eqref{ineq:H^2-lower-bound} and \eqref{eq:proof-main-1:area-balls}, and the area-preservation of $f$ give 
\begin{align*}
    36\delta^2&\ge\hm^2(B(f(x),3\delta))\ge \hm^2(\cup_{j=1}^m B_j)
    =\sum_{j=1}^m \hm^2(B_j)\ge m \frac{\delta^2}{8L^2}.
\end{align*}
This shows that $m\le 288 L^2$, a contradiction. Hence, \eqref{eq:proof-main-1:diam-image} is verified.

The preceding argument holds for all sufficiently small $\varepsilon>0$. For each such $\varepsilon$, choose points $a_\varepsilon,b_\varepsilon\in[-\varepsilon,\varepsilon]$ satisfying \[ d((f\circ\gamma)(a_\varepsilon),(f\circ\gamma)(b_\varepsilon)) =\operatorname{diam}(|\psi_\varepsilon|). \] Then, the definition of strong metric differentiability yields
\begin{align*}
    |(f\circ\gamma)'|(0)&=\lim_{\varepsilon\searrow0}\frac{d((f\circ\gamma)(a_\varepsilon),(f\circ\gamma)(b_\varepsilon))}{|a_\varepsilon|+|b_\varepsilon|}\\
    &\ge\lim_{\varepsilon\searrow0}\frac{\diam(|\psi_\varepsilon|)}{2\varepsilon}\ge \lim_{\varepsilon\searrow0}\frac{\delta}{2\varepsilon}= 10^{-4}L^{-1}.
\end{align*}
This concludes the proof of Theorem \ref{thm:main} after setting $C=10^{-4}L^{-1}$.
\end{proof}

Theorem \ref{thm:main-higher-dim} is implied by the following remark.
\begin{rmk} Let $X$ and $Y$ be metric spaces of locally finite Hausdorff $n$-measure, $n \geq 2$. 
    Note that, except for Proposition~\ref{prop:inf-H^2-lower-bd}, the proof of Theorem \ref{thm:main} presented in this section does not depend on planar topology. By assuming that the domain $X$ is lower Ahlfors $n$-regular with constant $c>0$, we circumvent the usage of Proposition~\ref{prop:inf-H^2-lower-bd}. 
    
    Namely, fix a point $x\in X\setminus E$, where $E$ is defined analogously as in the proof of Theorem \ref{thm:main}. Let $x=\gamma(t)$, where $\gamma$ and $t$ are chosen as above. Then, all of the above arguments hold after setting $k=1$ and choosing $m=50c^{-1}L^n$ and $\delta=200^{-1}c\varepsilon L^{-n}$. The statement of Theorem \ref{thm:main-higher-dim} now follows for $C=400^{-1}c L^{-n}$.  

    The above argument can be generalized to get a stronger statement of Theorem \ref{thm:main-higher-dim} where instead of the global lower Ahlfors $n$-regularity \eqref{eq:lower-Ahlfors} one assumes that there is $c>0$ such that almost every $x \in X$ has an open neighborhood on which \eqref{eq:lower-Ahlfors} holds. 
\end{rmk}

\section{Singular parts of metric surfaces} \label{sec:proof-thm-unrectifiable}
The goal of this section is to establish Theorem \ref{thm:unrectifiable} as well as the essential independence of the singular-set images, Theorem \ref{thm:singular}. 

Let $X$ be a metric surface. By covering $X$ with open subsets satisfying the assumptions in Theorem \ref{thm:uniformization} if necessary, we may assume that there is a uniformization map $u:D \to X$ satisfying the conclusions of Theorem~\ref{thm:uniformization}. Let $X^u_0$ be the singular-set image of $u$ in Definition \ref{def:singular}. 
The proof of Theorem \ref{thm:unrectifiable} is carried out by investigating \emph{curve-regular} points.  

\begin{definition} 
A point $x \in X$ is \emph{curve-regular} if there exists an arc length parametrized curve $\gamma\colon [a,b]\to X$ such that $x=\gamma(t)$ for some $t\in(a,b)$ and $\gamma$ is strongly metrically differentiable at $t$ with $|\gamma'|(t)=1$. We denote the set of curve-regular points by $X_1$.
\end{definition} 

The next result follows from Proposition \ref{prop:inf-H^2-lower-bd} and its proof; see \eqref{ineq:later} for the proof of Part (i).   

\begin{corollary}\label{cor:value-metr-diff}
    For every $x\in X_1$ there is $s_x>0$ such that the following properties hold for every $0<s<s_x$: 
    
    \begin{itemize}
    \item[(i)] $S(x,s)$ contains a continuum $\Gamma(s)=\Gamma(x,s)$ with $\diam \Gamma(x,s)\ge s$. 
    \item[(ii)] we have $$\hm^2(B(x,s))\ge \frac{\pi}{8}s^2. $$  
    \end{itemize}
\end{corollary}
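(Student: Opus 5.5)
The plan is to reduce to Proposition~\ref{prop:inf-H^2-lower-bd} with $t_0=0$, reusing its proof but now tracking the full range of radii. Let $x\in X_1$ and let $\gamma\colon[a,b]\to X$ be an arc length parametrized curve with $x=\gamma(t)$, $t\in(a,b)$, strongly metrically differentiable at $t$ with $|\gamma'|(t)=1$. After translating the parameter we may assume $t=0$. Fix $k=8$. Since $|\gamma'|(0)=1$ is a limit of $d(\gamma(\tau),\gamma(-\tau))/(2\tau)$, there is $\varepsilon_0>0$ with $[-\varepsilon_0,\varepsilon_0]\subset(a,b)$ such that Condition \eqref{ineq:strong-diff} holds for $0<\tau<\varepsilon_0$. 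Proposition~\ref{prop:inf-H^2-lower-bd} then applies to $\gamma|_{[-\varepsilon_0,\varepsilon_0]}$ and gives some $\varepsilon_1>0$.

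For part (i), the proof of Proposition~\ref{prop:inf-H^2-lower-bd} shows the following: for every $0<\varepsilon<\varepsilon_1$, every $t_0$ with $|t_0|<\varepsilon/2$ (in particular $t_0=0$), and every $s\in(\delta/2,\delta)$ with $\delta=2\varepsilon/k$, the sphere $S(\gamma(t_0),s)$ contains a continuum $\Gamma(s)$ with $\diam\Gamma(s)\ge d(\gamma(a),\gamma(b))\ge s$; see \eqref{ineq:later}. As $\varepsilon$ ranges over $(0,\varepsilon_1)$, the intervals $(\varepsilon/k,2\varepsilon/k)$ cover $(0,2\varepsilon_1/k)$. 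So we set $s_x:=2\varepsilon_1/k$. Then every $s<s_x$ lies in such an interval for some admissible $\varepsilon$, and (i) follows. The only thing to check here is that the separation arguments in that proof use only $s<\delta$ and $\varepsilon<\varepsilon_1$, and not a specific relation between $s$ and $\varepsilon$. They do: \eqref{ineq:sepi} only needs $(1/2-k^{-1})\varepsilon>s$, which holds for $s<\delta$.

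For part (ii), I would not use the weaker bound $\delta^2/8$ stated in the Proposition. Instead, apply the coarea inequality (Theorem~\ref{thm:coarea:classical}) with $g=\chi_{B(x,s)}$ and the $1$-Lipschitz function $y\mapsto d(x,y)$, and integrate over all radii $r\in(0,s)$. By (i), for each such $r<s<s_x$ we have $\hm^1(S(x,r))\ge\hm^1(\Gamma(x,r))\ge\diam\Gamma(x,r)\ge r$. Hence
$$\hm^2(B(x,s))\ge\frac{\pi}{4}\int_0^s\hm^1(S(x,r))\,dr\ge\frac{\pi}{4}\int_0^s r\,dr=\frac{\pi}{8}s^2.$$

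The main obstacle is the bookkeeping in (i), namely confirming that a single $\varepsilon_1$ gives the sphere estimate for all small radii. A second point is the measurability, or the upper-integral formulation, in the coarea step. This is handled by the remark after Theorem~\ref{thm:coarea:classical}, since the right-hand side is finite by local finiteness of $\hm^2$, after shrinking $s_x$ so that $\bar B(x,s_x)$ is compact.
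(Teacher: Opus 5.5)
Your proposal is correct and follows the paper's route. You run the proof of Proposition~\ref{prop:inf-H^2-lower-bd} with $t_0=0$ to obtain the continua in (i) via \eqref{ineq:later}, and then apply the coarea inequality over all radii in $(0,s)$ to get $\frac{\pi}{4}\int_0^s r\,dr=\frac{\pi}{8}s^2$. One correction to your bookkeeping remark: the diameter step $2s-k^{-1}\varepsilon\ge s$ does use the lower bound $s\ge\varepsilon/k=\delta/2$, contrary to your claim that no relation between $s$ and $\varepsilon$ is needed, but your choice of $\varepsilon$ with $s\in(\varepsilon/k,2\varepsilon/k)$ already guarantees this, so the argument stands.
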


Now suppose that $\gamma\colon[a,b]\to X$ is a rectifiable curve satisfying the condition of Theorem \ref{thm:unrectifiable}, i.e., 
$$ 
\hm^1(|\gamma| \cap X^u_0)>0. 
$$
We may assume without loss of generality that $\gamma$ is arc length parametrized. Since $\gamma$ is strongly metrically differentiable at almost every $t\in[a,b]$ by Theorem \ref{thm:LipschitzMetricDiff}, we furthermore have 
$$
\hm^1(|\gamma| \cap (X^u_0 \cap X_1))>0. 
$$
Therefore, Theorem \ref{thm:unrectifiable} follows after we establish the next proposition. 

\begin{prop}\label{prop:F} 
We have $\hm^2(F)=0$, where $F:=X^u_0 \cap X_1$. 
\end{prop}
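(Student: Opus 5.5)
We prove the estimate in the domain $D$ of the uniformization map $u$. The idea is to bound $\hm^2(F)$ by the Dirichlet energy of $u$ over a neighbourhood of $u^{-1}(F)$, and then to make that energy arbitrarily small using $|G_0|_2=0$. Let $\rho\in L^2(D)$ be a ($2$-weak) upper gradient of $u$.

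\emph{Step 1: reduction to points with all preimages in $G_0$.} By Theorem~\ref{thm:uniformization}, $N(x,u,D)=1$ for almost every $x$. Since $F\subset u(G_0)$, the set $F\setminus F'$ has $\hm^2$-measure zero, where
$$F':=\{x\in F: u^{-1}(x)\subset G_0\}.$$
Hence it suffices to show $\hm^2(F')=0$. Fix $\eta>0$. Because $|G_0|_2=0$ and $\rho^2$ is integrable, absolute continuity of the integral gives an open set $W\supset G_0\supset u^{-1}(F')$ with $\int_W\rho^2\,dA<\eta$.

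\emph{Step 2: energy lower bound near each point of $F'$.} Fix $x\in F'$. Since $u$ is continuous and proper and $u^{-1}(x)$ is a compact subset of $W$, there is $s_x'>0$ with $u^{-1}(B(x,s))\subset W$ for all $s<s_x'$. We claim there is an absolute constant $c>0$ such that, for all small $s$,
$$\int_{u^{-1}(B(x,s))}\rho^2\,dA\ge c\,s^2.$$
To prove it, apply Corollary~\ref{cor:value-metr-diff}(i): for $s'\in(s/2,s)$ the sphere $S(x,s')$ contains a continuum $\Gamma(x,s')$ of diameter at least $s'$. Since $u$ is monotone and proper, $u^{-1}(\Gamma(x,s'))$ is a continuum contained in the level set $\{d(x,u(\cdot))=s'\}$, and it is mapped onto a set of diameter at least $s'$. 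For almost every level $s'$ the upper gradient inequality should apply along this level set, giving
$$\int_{\{d(x,u)=s'\}}\rho\,d\hm^1\ge s'.$$
The function $v:=d(x,u(\cdot))$ is a continuous Sobolev function with $|\nabla v|\le\rho$. The coarea formula (Theorem~\ref{thm:coarea:Sobolev}) with $g=\rho$ then yields
$$\int_{u^{-1}(B(x,s))}\rho^2\,dA\ \ge\ \int_{s/2}^{s}\int_{v^{-1}(s')}\rho\,d\hm^1\,ds'\ \ge\ \tfrac38 s^2.$$

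\emph{Step 3: covering.} Fix $\delta>0$. Apply the $5r$-covering lemma to the balls $B(x,s)$ with $x\in F'$ and $s<\min(s_x,s_x',\delta)$. This gives disjoint balls $B(x_i,s_i)$ such that the enlarged balls $B(x_i,5s_i)$ cover $F'$. The preimages $u^{-1}(B(x_i,s_i))$ are disjoint and contained in $W$, so by Step 2
$$\hm^2_{10\delta}(F')\le C\sum_i s_i^2\le C\int_W\rho^2\,dA< C\eta.$$
Letting first $\delta\to0$ and then $\eta\to0$ gives $\hm^2(F')=0$, and hence $\hm^2(F)=0$.

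\emph{Main obstacle.} The delicate point is Step 2: justifying that for almost every $s'$ the preimage continuum $u^{-1}(\Gamma(x,s'))$ satisfies $\int\rho\,d\hm^1\ge \diam \Gamma(x,s')$. A continuum is not a curve, so the upper gradient inequality does not apply to it directly. I would first try to use that for almost every level, the level sets of the continuous Sobolev function $v$ are locally rectifiable, with $u$ absolutely continuous along them and $\rho$ an upper gradient there (the standard Lebesgue-level argument used in \cite{NR22}). Then $\hm^1$ of the image continuum is bounded by $\int\rho\,d\hm^1$ over the preimage, and since $\hm^1$ of a continuum is at least its diameter, this gives the required bound $\int\rho\,d\hm^1\ge s'$.
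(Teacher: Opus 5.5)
Your outline is essentially the paper's proof. You reduce to points whose unique preimage lies in $G_0$, take an open set around $G_0$ with small Dirichlet energy, and get an energy lower bound $\gtrsim s^2$ on $u^{-1}(B(x,s))$ from the continuum $\Gamma(x,s')$ of Corollary~\ref{cor:value-metr-diff}(i), monotonicity and properness, and the Sobolev coarea formula; the $5r$-covering finishes. Arguing directly rather than by contradiction with a compact $F'$, and using a neighbourhood $W$ of all of $G_0$ plus properness pointwise, are harmless variations.

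The step that is not yet a proof is the one you flag yourself. Your sketch, that a.e.\ level set is ``locally rectifiable'' with $\rho$ an upper gradient ``there'', never produces a \emph{curve} to which the upper gradient inequality applies. Your intermediate claim $\hm^1(u(K))\le\int_K\rho\,d\hm^1$ for $K:=u^{-1}(\Gamma(x,s'))$ is more than you need and not immediate: it would require a non-overlapping arc decomposition of $K$ and a Lusin-type property of $u$ along $K$.

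The missing idea is to pass from the continuum $K$ to a rectifiable curve inside it.
\begin{enumerate}[(1)]
\item Take $\rho$ to be a genuine upper gradient, which is available since $u\in N^{1,2}(D,X)$. With a merely $2$-weak one you would also have to show that the curves below avoid the exceptional family.
\item The coarea formula with $g\equiv 1$ gives $\hm^1(v^{-1}(s'))<\infty$ for a.e.\ $s'$. Hence $K\subset v^{-1}(s')$ is a continuum of finite $\hm^1$-measure.
\item The paper now invokes \cite{RR19}*{Proposition 5.1}: $K$ is the image of a $1$-Lipschitz curve $\psi$ that is at most two-to-one $\hm^1$-a.e. This yields $2\int_{v^{-1}(s')}\rho\,d\hm^1\ge\int_\psi\rho\,ds\ge\ell(u\circ\psi)\ge\diam\Gamma(x,s')\ge s'$, which is your bound up to a harmless factor $2$.
\end{enumerate}

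A simpler alternative gives your constant exactly. A continuum of finite $\hm^1$-measure is arcwise connected, and its arcs are rectifiable. Pick $p,q\in\Gamma(x,s')$ with $d(p,q)=\diam\Gamma(x,s')$, and choose $a\in u^{-1}(p)$, $b\in u^{-1}(q)$, which lie in $K$. Join them by an arc $\alpha\subset K$. Then $s'\le d(u(a),u(b))\le\int_\alpha\rho\,ds\le\int_{v^{-1}(s')}\rho\,d\hm^1$.

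With either version inserted, the rest of your argument goes through as written.
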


\begin{proof}
    Suppose towards a contradiction that $\hm^2(F)>0$. Then $F\subset X^u_0$ has a compact subset $F'\subset F$ satisfying $\hm^2(F')>0$ and 
$$
N(x,u,D)=1 \quad \text{for every } x \in F'; 
$$
here we use the Borel regularity of $\hm^2$ and Theorem \ref{thm:uniformization}.

    Given $\varepsilon>0$, by outer regularity of Lebesgue measure and the fact that 
    $$
    u^{-1}(F) \subset G_0, \quad \text{hence} \quad |u^{-1}(F)|_2=|G_0|_2=0, 
    $$ 
    there exists an open set $U\subset D$ with $u^{-1}(F')\subset U$ and $|U|_2<\varepsilon$. The set $V:=D\setminus U$ is closed in $D$, and, by continuity and properness of $u$, the set $u(V)$ is closed in $X$. Compactness of $F'$ and the $5r$-covering lemma, see e.g.\ \cite{Hei01}*{Theorem 1.3}, now imply the following: 

    There exists a family of pairwise disjoint balls $B_j=B(x_j,r_j)\subset X\setminus u(V)$ centered at $x_j\in F'$ of radius $0<r_j <\min\{\varepsilon,s_{x_j}\}$, where $s_{x_j}>0$ is the constant in  Corollary \ref{cor:value-metr-diff}, such that the balls $5Bj:=B(x_j,5r_j)$ cover $F'$ and
    \begin{equation}\label{ineq:Lebesgue}
        \big|u^{-1}\big(\bigcup_{j} B_j\big)\big|_2\le 
        |D\setminus V|_2=|U|_2<\varepsilon.
    \end{equation}
    
    Fix $j$. By \cite{HKST:15}*{Theorem 7.1.20}, and the fact that $u\in N^{1,2}(D,X)$, the function $u_j\colon D\to\R$ defined by $u_j(z):=d(x_j,u(z))$ is in the classical Sobolev space $W^{1,2}(D)$. 
    By the coarea formula for Sobolev functions (Theorem \ref{thm:coarea:Sobolev}), there exists a set of full measure $J_j\subset(\nicefrac{r_j}{2},r_j)$ such that the level sets $u_j^{-1}(r)=u^{-1}(S(x_j,r))$ satisfy $\hm^1(u_j^{-1}(r))<\infty$ for every $r\in J_j$. 
    
    Choose any $r\in J_j$, and let $\Gamma(x_j,r) \subset S(x_j,r)$ be the continuum in  Corollary~\ref{cor:value-metr-diff}. Properness and monotonicity of $u$ imply that $u^{-1}(\Gamma(x_j,r))$ is compact and connected. By \cite{RR19}*{Proposition 5.1}, there exists a 1-Lipschitz curve $\psi_{j,r}\colon[a,b]\to D$ which satisfies 
    \begin{equation} \label{eq:surji} 
    |u\circ\psi_{j,r}|=\Gamma(x_j,r) 
    \end{equation} 
    and $\psi_{j,r}^{-1}(z)$ contains at most two points for $\hm^1$-almost every $z\in u_j^{-1}(r)$.

    Let $\rho\in L^2(D)$ be an upper gradient of $u$. Since the distance function is 1-Lipschitz, we get $|\nabla u_j|\le \rho$ almost everywhere in $D$. The assumptions on $\psi_{j,r}$ are such that 
    \begin{align*}
         2\int_{u_j^{-1}(t)}\rho\, d\mathcal H^1\ge \int_{\psi_{j,r}}\rho\, ds\ge \ell(u\circ\psi_{j,r})\ge \diam(|u\circ\psi_{j,r}|)\ge r; 
    \end{align*}
    here the last inequality follows from Corollary \ref{cor:value-metr-diff} and \eqref{eq:surji}. 
    As this is true for almost every $\nicefrac{r_j}{2} <r< r_j$,
    the coarea formula for Sobolev functions (Theorem \ref{thm:coarea:Sobolev}) applied to $u_j$ and $g=\rho$ now yields 
        \begin{align*}
            \frac{r_j^2}{8}\le\int_{\nicefrac{r_j}{2}}^{r_j} \int_{u_j^{-1}(r)}\rho\, d\mathcal H^1dr\le \int_{u^{-1}(B_j)} \rho\cdot|\nabla u_j|\, d\mathcal H^2\le \int_{u^{-1}(B_j)} \rho^2\, d\mathcal H^2. 
        \end{align*}
    Note that the balls $5B_j$ all satisfy $\diam 5B_j<10\varepsilon$ and cover $F'$. Thus, by definition of the Hausdorff content $\hm^2_{10\varepsilon}$, we get
    \begin{equation}\label{ineq:hmF'}
    \begin{split}
        \hm^2_{10\varepsilon}(F')\le \sum_j(5r_j)^2\le200\sum_j\frac{r_j^2}{8}&\le 200\sum_j \int_{u^{-1}(B_j)} \rho^2\, d\mathcal H^2\\&=200\int_{u^{-1}(\bigcup_jB_j)} \rho^2\, d\mathcal H^2,
    \end{split}
    \end{equation}
    where the last equality follows as the balls $B_j$ are disjoint. By applying \eqref{ineq:Lebesgue}, \eqref{ineq:hmF'}, and the absolute continuity of integrals, we obtain $\hm^2_{10\varepsilon}(F')\to 0$ as $\varepsilon\to 0$. This contradicts $\hm^2(F')>0$, and thus, $\hm^2(F)>0$.
\end{proof}

We note that slight modification of the proof given above gives the following generalization of Theorem \ref{thm:unrectifiable}. 

\begin{thm}\label{thm:unrectifiable:general}
    Let $X$ be a metric surface, let $M$ be a Riemannian 2-manifold and let $u\in\Nloc(M,X)$ be continuous, surjective, proper and monotone. If $A\subset X$ satisfies $|u^{-1}(A)|_2=0$, then the family of all curves $\gamma$ in $X$ with $\hm^1(|\gamma|\cap A)>0$ is exceptional.
\end{thm}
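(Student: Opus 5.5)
The plan is to repeat the argument behind Theorem \ref{thm:unrectifiable}, that is, Proposition~\ref{prop:F}, with $X^u_0$ replaced by $A$. The uniformization-specific facts ($N(x,u,D)=1$, the dilatation bound, Proposition~\ref{prop:appdiff}) were never really used there. What the argument needs is only that $u$ is continuous, surjective, proper, monotone and in $\Nloc$, and that $u^{-1}$ of the relevant set is Lebesgue-null. First I reduce to curve-regular points. If $\gamma$ is arc length parametrized and $\hm^1(|\gamma|\cap A)>0$, then Theorem~\ref{thm:LipschitzMetricDiff} gives $\hm^1(|\gamma|\cap A\cap X_1)>0$. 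Hence it suffices to show $\hm^2(F)=0$ for $F:=A\cap X_1$, and then to use $F$ as the null set in Definition~\ref{def:exceptional}.

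Since $\hm^2$ is $\sigma$-additive over a countable cover, I localize. I cover $M$ by countably many coordinate disks $D_i$ that are bi-Lipschitz to $D$ and have compact closures. On each of them $u\in N^{1,2}$ with some upper gradient $\rho_i\in L^2$, and $|u^{-1}(A)\cap D_i|_2=0$. Suppose toward a contradiction that $\hm^2(F)>0$. Then some compact $F'\subset F\cap u(D_i')$ has positive measure, where $D_i'\Subset D_i$ is a smaller disk. Fix $\varepsilon>0$. Because $u^{-1}(F')\subset u^{-1}(A)$ is null, outer regularity gives an open $U\supset u^{-1}(F')$ with $|U|_2<\varepsilon$. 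Properness makes $u(M\setminus U)$ closed, and it misses $F'$. Note that $u(M\setminus U)\cap F'=\emptyset$ requires $u^{-1}(F')\subset U$, which holds by construction; surjectivity is not even needed here. The $5r$-covering lemma with Corollary~\ref{cor:value-metr-diff} then gives disjoint balls $B_j=B(x_j,r_j)$, $x_j\in F'$, $r_j<\min\{\varepsilon,s_{x_j}\}$, that avoid $u(M\setminus U)$, such that the balls $5B_j$ cover $F'$ and $|u^{-1}(\bigcup B_j)|_2<\varepsilon$. For small $\varepsilon$, properness lets me assume $u^{-1}(\bigcup_j B_j)\subset D_i$.

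The core estimate proceeds exactly as before. For each $j$ set $u_j=d(x_j,u(\cdot))\in W^{1,2}$, which satisfies $|\nabla u_j|\le\rho_i$. For a.e.\ $r\in(r_j/2,r_j)$ the level set $u_j^{-1}(r)$ has finite length. The continuum $\Gamma(x_j,r)$ from Corollary~\ref{cor:value-metr-diff} has a compact connected preimage by properness and monotonicity. Applying \cite{RR19}*{Proposition 5.1} in the chart produces a curve $\psi_{j,r}$ with at most double covering such that $|u\circ\psi_{j,r}|=\Gamma(x_j,r)$. This yields $2\int_{u_j^{-1}(r)}\rho_i\,d\hm^1\ge r$. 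The coarea formula, Theorem~\ref{thm:coarea:Sobolev}, then gives $r_j^2\lesssim\int_{u^{-1}(B_j)}\rho_i^2$. Summing over $j$, I get $\hm^2_{10\varepsilon}(F')\lesssim\int_{u^{-1}(\bigcup B_j)}\rho_i^2\to0$ by absolute continuity, which is a contradiction.

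I expect the main obstacle to be the localization to charts. The preimage of a small ball must lie in a single chart where $u$ has an $L^2$ upper gradient and where \cite{RR19} applies. The bi-Lipschitz distortion of the chart only changes constants. Continuity alone does not give this, so I need properness: for $x\in F'$, the fiber $u^{-1}(x)$ is compact and connected, but it might not fit inside one chart. To handle this, I would choose charts adapted to the fibers. For each $x\in F'$, compactness of $u^{-1}(x)$ lets me take a relatively compact open neighborhood $W_x$ of the fiber that is a finite union of disks. Properness then gives $u^{-1}(B(x,r))\subset W_x$ for small $r$. A countable subcover reduces the problem to $u$ restricted to $W_x$, where $u\in N^{1,2}$. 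I would then apply \cite{RR19} to $W_x$, which has finitely many bi-Lipschitz disk pieces, or else invoke its validity for Riemannian domains.
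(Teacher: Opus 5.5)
Your proposal is correct and is essentially the paper's argument: the paper only says the result follows by a ``slight modification'' of the proof of Proposition~\ref{prop:F}, and that modification is what you carry out. You rerun that proof for $F=A\cap X_1$, with the hypothesis $|u^{-1}(A)|_2=0$ replacing the inclusion $u^{-1}(F)\subset G_0$ (which in the paper came from $N(x,u,D)=1$), and you add explicit localization. One correction: the claim in your third paragraph that $u^{-1}(\bigcup_j B_j)\subset D_i$ can fail, because a fiber over a point of $u(D_i')$ need not lie in $D_i$. So the fiber-adapted localization of your last paragraph is the one to use; in fact properness alone makes $u^{-1}(\bar B(x,r))$ compact for small $r$, so $\rho\in L^2$ there.
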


We end this section by proving Theorem \ref{thm:singular}, i.e., that the singular-set image of $u$ is essentially independent of $u$. 
The proof will depend on the following lemma.

\begin{lemma}\label{lem:singular}
    Almost every point in $X\setminus X_0^u$ is contained in $X_1$.
\end{lemma}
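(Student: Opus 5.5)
Since $X\setminus X^u_0 = u(\bigcup_{j\ge1} G_j)$ up to how $X_0^u$ is defined, it suffices to fix $j\ge 1$ and show that $\hm^2$-almost every point of $u(G_j)$ lies in $X_1$. The idea is to build short straight segments in the disk through density points of $G_j$, push them forward by $u$, and use the approximate metric differentiability of Proposition~\ref{prop:appdiff} to see that the image curves are strongly metrically differentiable with nonzero speed at the relevant point. Reparametrizing by arc length then gives the curve required in the definition of curve-regular points.

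Step 1. By the area formula (Theorem~\ref{thm:area-formula}) with $E=\{z\in G_j: J(\apmd u_z)=0\}$, the image $u(E)$ has $\hm^2$-measure zero. By Theorem~\ref{thm:uniformization} we may also discard a null set where $N(x,u,D)\neq 1$. So it suffices to consider $x=u(z)$ with $z\in G_j$ such that $\apmd u_z$ is a norm, $z$ is a Lebesgue density point of $G_j$, and $z$ is a point where the conclusions of Proposition~\ref{prop:appdiff} hold. Such $z$ form a full-measure subset of $G_j$, and by the area formula their complement in $G_j$ maps to a null set.

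Step 2. We want a curve through $z$ along which $u$ is well controlled. Fix a direction $e$. By Fubini and the upper gradient property of $u\in N^{1,2}$, for almost every line parallel to $e$ the restriction of $u$ is absolutely continuous, with $\rho\in L^2$ along the line. Near density points, most of each such segment lies in $G_j$. On $G_j$, Proposition~\ref{prop:appdiff}(iii) gives
\[
d(u(z+v),u(z+w))\approx \apmd u_z(v-w).
\]
On the complementary gaps we need smallness of $\int\rho$ over the gap portions of the segment. This holds at points that are Lebesgue points of $\rho^2$ along lines; equivalently, one uses a maximal function bound for $\rho\chi_{D\setminus G_j}$ restricted to lines. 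This lets us find, for a.e.\ $z\in G_j$, a segment $\sigma(t)=z+te$ such that $\sigma(0)=z$ and, for $s,t\to0$,
\[
d(u(\sigma(s)),u(\sigma(t)))=\apmd u_z(e)\,|s-t|+o(|s|+|t|).
\]
This is exactly strong metric differentiability of $u\circ\sigma$ at $0$ with metric derivative $\apmd u_z(e)>0$. The curve $u\circ\sigma$ is rectifiable because $\rho$ is integrable on it.

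Step 3. Reparametrize $u\circ\sigma$ by arc length. Strong metric differentiability at $0$ with positive derivative transfers to the arc length parametrization with derivative $1$, because the length function $\lambda(t)=\ell(u\circ\sigma|_{[0,t]})$ satisfies $\lambda'(0)=\apmd u_z(e)$. This last fact follows from absolute continuity together with the Lebesgue-point control of the speed from Step 2. Hence $x\in X_1$.

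The main obstacle is Step 2: controlling the behaviour of $u$ on $\sigma$ inside the gaps $D\setminus G_j$, where only the upper gradient bound is available. The first approach I would try is a Fubini and one-dimensional maximal function argument. This should show that for a.e.\ $z$, both $\int_{\sigma\cap[s,t]\setminus G_j}\rho$ and the gap lengths are $o(|s|+|t|)$, and that the points of $\sigma\cap G_j$ are $o(|s|+|t|)$-dense near $z$.
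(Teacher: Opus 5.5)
Your proposal is correct, but it differs from the paper at the decisive step.

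You verify curve-regularity of $u(z)$ pointwise, using the image of the straight segment $\sigma$ through $z$ as the witness curve. This requires two extra pieces of one-dimensional analysis.

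First, you must control $u$ on the gaps $\sigma^{-1}(D\setminus G_j)$ by upper-gradient integrals. This works because, for a.e.\ $z$ by Fubini, $0$ is a one-dimensional density point of $\sigma^{-1}(G_j)$ and the maximal function of $(\rho\circ\sigma)^2$ is finite at $0$. Hence $\int_{[-r,r]\setminus\sigma^{-1}(G_j)}\rho\circ\sigma\le (o(r)\,O(r))^{1/2}=o(r)$. Note that the planar density of $G_j$ at $z$ that you list in Step 1 would not suffice for this.

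Second, you need $\lambda'(0)=\apmd u_z(e)$ for the length function. This does not follow from strong metric differentiability alone. It does follow because (iii) of Proposition~\ref{prop:appdiff} is uniform over pairs near $z$. That gives $|(u\circ\sigma)'|\le \apmd u_z(e)+\varepsilon$ a.e.\ on $\sigma^{-1}(G_j)$ near $0$, and the gaps contribute $o(t)$ by the first estimate.

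The paper avoids both of these steps:
\begin{enumerate}
\item It takes strong metric differentiability of $u\circ\gamma_s$ at $t$ as given for a.e.\ $t$ (condition (ii)).
\item It computes the metric derivative along points of $G_j$, so (iii) yields the lower bound $l_u(z)>0$ without the gaps ever entering.
\item It applies Kirchheim's theorem to the arc-length reparametrization $\gamma^u_s$. Every point of $|\gamma^u_s|$ outside an $\hm^1$-null set $W_s$ is then curve-regular, witnessed by $\gamma^u_s$ itself.
\item The positive speed on $G_j'$ pulls $W_s$ back to a null subset of the segment, and Fubini plus the area formula finish.
\end{enumerate}

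So $J(\apmd u_z)>0$ plays different roles in the two proofs. For you it gives nonzero speed at the specific point, so the arc-length reparametrization has derivative $1$ there. For the paper it is a Lusin-type condition that lets $\hm^1$-null sets on the image curve be pulled back.

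Your route gives slightly more: for every fixed direction $e$ and a.e.\ $z\in G_j$, an explicit witness curve through $u(z)$ with speed $\apmd u_z(e)$. The paper's route is softer and shorter, and it skips the delicate length estimate.
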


\begin{proof}
    We fix a square 
$$
Q=Q(z_0,r)=z_0+[0,r]^2 \subset D, 
$$
and note that it suffices to show that almost every point in 
$u(G_j\cap Q)$, $j=1,2,\ldots$, is in $X_1$; recall the sets $G_j$ in Proposition \ref{prop:appdiff}. 

The Sobolev map $u$ is absolutely continuous on every horizontal line segment $I_s:=z_0+is+[0,r] \subset Q$, $s \in [0,r] \setminus E$, where 
$|E|_1=0$. Here, for convenience of notation, we consider $D$ as a subset of the complex plane $\mathbb{C}$. For $s\in[0,r]\setminus E$, we denote  
$$
\gamma_s:[0,r] \to Q, \quad \gamma_s(t):=z_0+t+is
$$
and consider the set $G_j'$ of points  $z=\gamma_s(t)\in Q \cap G_j$ that satisfy the following conditions : 
\begin{itemize} 
\item[(i)] $J(\apmd u_{z})>0$, 
\item[(ii)] $s \in [0,r] \setminus E$ and $u \circ \gamma_s$ is strongly metrically differentiable at $t$.    
\item[(iii)] $z$ is a density point of $G_j \cap I_s$ in $I_s$. 
\end{itemize}
Note that if $Z$ is the set of points in $G_j \cap Q$ for which condition (i) does not hold, then $\hm^2(u(Z))=0$ by the area formula (Theorem \ref{thm:area-formula}). On the other hand, the set of points in $G_j \cap Q$ for which one of conditions (ii) or (iii) does not hold is of Lebesgue measure zero by Fubini's theorem. Therefore, by the area formula we have 
$\hm^2(u(G_j \cap Q \setminus G_j'))=0$. We conclude that the theorem follows if we can prove that $\hm^2(u(G_j') \setminus X_1)=0$. 

The set $G_j'$ is defined such that 
\begin{equation} \label{eq:soki}
|(u\circ \gamma_s)'|(t)>0 \quad \text{for every } z=\gamma_s(t) \in G_j'. 
\end{equation} 
Indeed, condition (i) and \eqref{eq:disto} imply that the minimal dilatation of $u$ at $z$ satisfies 
$l_u(z) >0$, while conditions (ii) and (iii) together with Proposition \ref{prop:appdiff} imply that $|(u\circ \gamma_s)'|$ exists at $t$ and is bounded from below by $l_u(z)$. 

We fix $s \in [0,r] \setminus E$, and let $\gamma^u_s:[0,\ell_s] \to u(\gamma_s([0,r]))$ be the arc length parametrization of $u \circ \gamma_s$. By Theorem \ref{thm:LipschitzMetricDiff}, the set 
$W_s:=u(\gamma_s([0,r])) \setminus X_1$ of non-curve-regular points 
satisfies $\hm^1(W_s)=0$. Therefore, by \eqref{eq:soki} and the change of variables formula, 
\begin{equation} \label{eq:prefubi}
|(G_j' \cap I_s) \setminus u^{-1}(X_1)|_1=0.  
\end{equation} 
Combining \eqref{eq:prefubi} and Fubini's theorem, we conclude that 
$$
|G_j' \setminus u^{-1}(X_1)|_2=0. 
$$
A final application of the area formula then shows that $\hm^2(u(G_j') \setminus X_1)=0$, as desired. The proof is complete. 
\end{proof}

\begin{proof}[Proof of Theorem \ref{thm:singular}]
By covering $X$ with open subsets that satisfy the assumptions of Theorem \ref{thm:uniformization} if necessary, we may assume that all the uniformization maps onto $X$ are from $D$. We fix such a map $u_0:D \to X$, and set $X_0:=X_0^{u_0}$. Let $u$ be another uniformization map. Proposition \ref{prop:F} applied to $u$ and Lemma \ref{lem:singular} applied to $u_0$ yield   
$$ 
\hm^2(X_0^{u} \setminus X_0) \leq \hm^2(X_0^{u} \cap X_1)+ 
\hm^2(X \setminus (X_0 \cup X_1))=0. 
$$
The complementary identity $\hm^2(X_0 \setminus X_0^{u})=0$ follows by changing the roles of $u_0$ and $u$. The proof is complete. 
\end{proof}

\section{Proof of Theorem \ref{thm:main} via uniformization of metric surfaces}\label{sec:proof-uniformization} 
In this section, we provide a proof of Theorem \ref{thm:main} that depends on uniformization of metric surfaces. Let $X$ and $Y$ be metric surfaces and $f\colon X\to Y$ an area-preserving $L$-Lipschitz map, $L\ge1$. 
As before, we may assume that there is a uniformization map $u\colon D\to X$ as in Theorem \ref{thm:uniformization}. Let $X_0=u(G_0)$ be the corresponding singular-set image in Definition \ref{def:singular}, and let $G_j$, $j=0,1,2,\ldots$ be the sets in Proposition \ref{prop:appdiff}. 

Theorem \ref{thm:main} follows by combining Theorem \ref{thm:unrectifiable} with the next result. 

\begin{thm}\label{thm:rectifiable}
We have 
    \begin{equation}\label{ineq:BLD-rectifiable2}
        \frac{\pi}{8L}\,\ell(\gamma)\leq\ell(f\circ\gamma)\leq L\,\ell(\gamma) 
    \end{equation} 
    for almost every rectifiable curve $\gamma$ in $X$ for which $\hm^1(|\gamma|\cap X_0)=0$.
\end{thm}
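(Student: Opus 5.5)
The upper bound in \eqref{ineq:BLD-rectifiable2} is immediate from the $L$-Lipschitz property of $f$, so the plan concerns only the lower bound. The idea is to reduce to a pointwise estimate on the metric derivative of $f\circ\gamma$ on the rectifiable image of $u$, which is covered by the Lipschitz pieces $u(G_j)$. Define $h:=f\circ u\in N^{1,2}(D,Y)$; it is Lipschitz on each $G_j$ because $f$ is Lipschitz. Apply Proposition \ref{prop:appdiff} to $h$ and intersect the resulting sets with the $G_j$'s for $u$. This gives compact sets, which I again call $G_j$, on which both $u$ and $h$ are Lipschitz and on which both $\apmd u_z$ and $\apmd h_z$ exist with the uniform approximation property (iii).

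The key pointwise claim is
\[
\apmd h_z(v)\ \ge\ \frac{\pi}{8L}\,\apmd u_z(v)\quad\text{for all } v\in\R^2,
\]
for almost every $z$ in $\bigcup_j G_j$ with $J(\apmd u_z)>0$. To prove it, I would use the area formula (Theorem \ref{thm:area-formula}) twice, once for $u$ and once for $h$, on small measurable sets $E\subset G_j$. Since $N(\cdot,u,D)=1$ almost everywhere and $f$ is area-preserving, this gives $\int_E J(\apmd h_z)\,dA=\int_Y N(y,h,E)\,d\hm^2\ge \hm^2(f(u(E)))=\hm^2(u(E))=\int_E J(\apmd u_z)\,dA$. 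Lebesgue differentiation then yields $J(\apmd h_z)\ge J(\apmd u_z)$ almost everywhere. The Lipschitz property gives $\apmd h_z\le L\,\apmd u_z$, so $L_h\le L\,L_u$. Combining these with \eqref{eq:disto} and \eqref{ineq:dilatationbound}:
\[
2L\,L_u(z)\,l_h(z)\ \ge\ 2L_h l_h\ \ge\ J(\apmd h_z)\ \ge\ J(\apmd u_z)\ \ge\ \tfrac{\pi}{4}L_u(z)^2 ,
\]
so $l_h(z)\ge \frac{\pi}{8L}L_u(z)\ge \frac{\pi}{8L}\apmd u_z(v)$ for unit $v$. This is exactly where the constant $\pi/(8L)$ comes from. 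The set where $J(\apmd u_z)=0$ has image of measure zero by the area formula, so it can be absorbed into the exceptional set.

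The main obstacle is transferring this pointwise Euclidean statement to an arbitrary curve $\gamma$ in $X$, since $\gamma$ need not be the image of a line in $D$. My plan is to work on the target side with the exceptional set $A:=X_0\cup u(Z)\cup u(\text{bad points})$, which is $\hm^2$-null, and to discard curves meeting $A$ in positive length. For an arc-length parametrized $\gamma$ with $\hm^1(|\gamma|\cap A)=0$, almost every $t$ has $\gamma(t)=u(z)$ with $z\in G_j$ a good point and $u^{-1}(\gamma(t))=\{z\}$ (the last condition again up to a null set, by $N=1$). Then I would show that $|(f\circ\gamma)'|(t)\ge \frac{\pi}{8L}$. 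Near $z$ one has approximately $d(u(z+v),u(z+w))\approx\apmd u_z(v-w)$ on $G_j$, and $\apmd u_z$ is a norm bounded below by $l_u(z)>0$, so $u|_{G_j}$ is locally bi-Lipschitz near $z$ at small scales. Points $\gamma(t\pm\varepsilon)$ that lie in $u(G_j)$, which holds for a density-one set of $\varepsilon$ after removing another null set of $t$ via Lebesgue density along $\gamma$, have preimages $z\pm v_\varepsilon$ close to $z$. Then
\[
d(f\gamma(t+\varepsilon),f\gamma(t-\varepsilon))\approx \apmd h_z(2v_\varepsilon)\ge \tfrac{\pi}{8L}\apmd u_z(2v_\varepsilon)\approx\tfrac{\pi}{8L}\,d(\gamma(t+\varepsilon),\gamma(t-\varepsilon))\approx \tfrac{\pi}{8L}\,2\varepsilon .
\]

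Justifying that $\gamma(t)\in u(G_j)$ with density one along the curve, and that the preimages converge to $z$, is the delicate part. I would handle the convergence using properness and monotonicity of $u$ together with $u^{-1}(\gamma(t))=\{z\}$. Integrating the resulting derivative bound gives \eqref{ineq:BLD-rectifiable2}.
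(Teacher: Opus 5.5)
Your proposal is correct and follows essentially the paper's route: refine the $G_j$ so that both $u$ and $h=f\circ u$ satisfy Proposition~\ref{prop:appdiff}; compare Jacobians via the area formula and area preservation; combine with $L_h\le L\,L_u$, \eqref{eq:disto} and \eqref{ineq:dilatationbound} to get $l_h\ge \frac{\pi}{8L}L_u$; and transfer this to $\gamma$ at density points of $\gamma^{-1}(u(G_j))$, with properness of $u$ and $u^{-1}(\gamma(t))=\{z\}$ giving convergence of preimages. Your observation that the inequality $J(\apmd h_z)\ge J(\apmd u_z)$ suffices is a small economy: it avoids the fact $N(y,f,X)=1$ for a.e.\ $y$ (from \cite{MeierNtalampekos}) that the paper uses to obtain equality. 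There are two minor slips, neither affecting validity. First, $X_0$ itself need not be $\hm^2$-null; only $u(Z)$, the images of bad points and $\{N(\cdot,u,D)>1\}$ are, and this is harmless because $\hm^1(|\gamma|\cap X_0)=0$ is the hypothesis. Second, the preimages of $\gamma(t\pm\varepsilon)$ are $z+v_\varepsilon$ and $z+w_\varepsilon$ rather than $z\pm v_\varepsilon$, which your bound $l_u(z)>0$ handles with $v_\varepsilon-w_\varepsilon$ in place of $2v_\varepsilon$.
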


\begin{rmk}\label{rmk:C(1)=1}
    For $L=1$, Theorem \ref{thm:rectifiable} follows from \cite{CreutzSoultanis}*{Proposition 4.1}, which holds with constant $C=1$ and for all countably $n$-rectifiable metric spaces $X,Y$. Thus, by making use of this instead of the proof given below, we may set $C(1)=1$ in Theorem \ref{thm:rectifiable} and in Theorem \ref{thm:main}.
\end{rmk}

The rest of this section is devoted to the proof of Theorem \ref{thm:rectifiable}. The upper bound in \eqref{ineq:BLD-rectifiable2} follows directly from the $L$-Lipschitz property of $f$. We are left to show the lower bound. 
    
Since $f$ is Lipschitz continuous, the map $h:=f\circ u$ is in $N^{1,2}(D,Y)$. An application of Theorem \ref{thm:unrectifiable:general} then shows that we lose no generality by assuming that, in addition to $u$, also $h$ satisfies the conclusions of Proposition~\ref{prop:appdiff} with the sets $G_j$ above. 

We define the following subsets of $X$: 
\begin{eqnarray*}
X'_0&:=&\{x: \, N(x,u,D)>1)\}, \\
X''_0&:=&\{x \notin X_0 \cup X'_0: \, J(\apmd u(u^{-1}(x)))=0\}, \quad \text{and } \\  
X'''_0&:=&\{x \notin X_0 \cup X'_0 \cup X''_0: \, 
J(\apmd u(u^{-1}(x))) \neq J(\apmd h(u^{-1}(x))) \}. 
\end{eqnarray*}
Recall that $X_0'$ is a null set by Theorem \ref{thm:uniformization}. Moreover, the area formula (Theorem \ref{thm:area-formula}) holds outside the singular-set image $X_0=u(G_0)$. Therefore $X_0''$ is also a null set, and
$$
\hm^2(X'_0 \cup X''_0)=0. 
$$
We claim that also 
\begin{equation} \label{eq:'''} 
\hm^2(X'''_0)=0. 
\end{equation}
Indeed, we have $N(y,f,D)=1$ for $\hm^2$-almost every $y \in Y$ by \cite{MeierNtalampekos}*{Lemma 3.2}. Since also $N(x,u,D)=1$ for $\hm^2$-almost every $x \in X$ and $f$ is area-preserving, $N(y,h,D)=1$ for every $y \in Y \setminus Y_0'$, where $\hm^2(Y_0')=0$. 

We now apply the area formula (Theorem \ref{thm:area-formula}) to both $u$ and $h$ on measurable sets 
$$
E \subset F:= D \setminus (u^{-1}(X_0 \cup X_0' \cup X_0'') \cup h^{-1}(Y_0')), 
$$ 
together with the area-preserving property of $f$, to conclude that 
$$
\int_{E} J(\apmd u(z))\, dA = \hm^2(u(E))=\hm^2(h(E))=
\int_{E} J(\apmd h(z))\, dA. 
$$
It follows that $J(\apmd u(z))=J(\apmd h(z))$ for almost every  
$z \in F$. Moreover, since the restriction of $u$ to $F \subset D \setminus G_0$ maps sets of zero Lebesgue measure to sets of zero Hausdorff $2$-measure, \eqref{eq:'''} holds.

We conclude that it suffices to prove the lower bound in \eqref{ineq:BLD-rectifiable2} for rectifiable curves $\gamma$ for which 
\begin{equation} \label{eq:orez}
\hm^1(|\gamma|\cap (X_0 \cup X'_0 \cup X''_0 \cup X'''_0))=0. 
\end{equation}
We fix such a curve $\gamma$. We may assume that $\gamma$ is arc length parametrized. Theorem \ref{thm:rectifiable} now follows if we can prove that 
\begin{equation} \label{ineq:derbound}
|(f\circ\gamma)'|(t) \geq \frac{\pi}{8L} \quad \text{for almost every } 0<t <\ell(\gamma). 
\end{equation}
    
    We denote 
    $$
    X_j:=u(G_j) \setminus (X'_0 \cup X''_0 \cup X'''_0), \quad j=1,2,\ldots. 
    $$
    By \eqref{eq:orez} and the fact that the preimage of a set of $\hm^1$-measure zero under an arc length parametrized curve is negligible, almost every $0<t_0<\ell(\gamma)$ 
    satisfies 
    \begin{equation} \label{eq:density}
    t_0 \in F_j:=\gamma^{-1}(X_j)\text{ is a density point of } F_j \text{ for some } j=1,2,\ldots;  
    \end{equation}
    note that $F_j \subset [0,\ell(\gamma)]$. 
    
    Recall that by Theorem \ref{thm:LipschitzMetricDiff}, both $\gamma$ and $f\circ \gamma$ are strongly metrically differentiable at almost every point. We fix $t_0$ such that \eqref{eq:density} holds and such that both $\gamma$ and $f \circ \gamma$ are strongly metrically differentiable at $t_0$. By \eqref{eq:density}, we can fix a sequence of points $t_m \in F_j$, $t_m \neq t_0$, such that $t_m \to t_0$ as $m \to \infty$. We denote $x_m:=\gamma(t_m)$ and $z_m:=u^{-1}(x_m)$; recall that $z_m$ is a single point by the definition of $X_j$. Proposition \ref{prop:appdiff} shows that 
    \begin{eqnarray} \label{ineq:derivest1}
& & \liminf_{m \to \infty} \frac{d_Y(f(x_m),f(x_0))}{|z_m-z_0|} \geq l_h(z_0) \quad \text{and } \\ \label{ineq:derivesti2} 
& & \limsup_{m \to \infty} \frac{d_X(x_m,x_0)}{|z_m-z_0|} \leq L_u(z_0); 
    \end{eqnarray}
here $l_h$ and $L_u$ are the minimal, resp., maximal dilatations of the approximate metric derivatives, defined before \eqref{eq:disto}. 

By the strong metric differentiability of $\gamma$ and $f\circ \gamma$ at $t_0$ and since ${|\gamma'|(t_0)=1}$, we have 
$$
|(f\circ \gamma)'|(t_0) = \lim_{m \to \infty} \frac{d_Y(f(x_m),f(x_0))}{d_X(x_m,x_0)} \geq \frac{l_h(z_0)}{L_u(z_0)}, 
$$
where the inequality is a consequence of \eqref{ineq:derivest1} and \eqref{ineq:derivesti2}. Therefore, the desired estimate \eqref{ineq:derbound} follows if we can show that 
\begin{equation} \label{ineq:hubound} 
\frac{l_h(z_0)}{L_u(z_0)} \geq \frac{\pi}{8L}. 
\end{equation} 
To prove \eqref{ineq:hubound}, we note that $L_h(z_0) \leq L_u(z_0) L$, where $L$ is the Lipschitz constant of $f$. Therefore, the dilatation estimate \eqref{eq:disto} yields 
\begin{equation} \label{ineq:lastdilatationbound} 
\frac{l_h(z_0)}{L_u(z_0)} \geq \frac{J(\apmd h(z_0))}{2L_u(z_0)^2L}=\frac{J(\apmd u(z_0))}{2L_u(z_0)^2L}; 
\end{equation}
here the equality follows from the definitions of $X_j$ and $X'''_0$, and our choice of $t_0$ in \eqref{eq:density}. Combining \eqref{ineq:lastdilatationbound} with \eqref{ineq:dilatationbound} gives \eqref{ineq:hubound}. The proof of Theorem \ref{thm:rectifiable} is complete.


\end{document}